\documentclass[a4paper,11pt]{article}
\usepackage[latin1]{inputenc}
\usepackage[english]{babel}
\usepackage{amsmath,amssymb,amsthm}
\usepackage{graphics}
\usepackage{color}
\newcounter{theorem}

\newtheorem{theo}[theorem]{Theorem}%
\newtheorem{prop}[theorem]{Proposition}%
\newtheorem{defi}[theorem]{Definition}%
\newtheorem{lemm}[theorem]{Lemma}%
\newtheorem{coro}[theorem]{Corollary}%

\usepackage{graphicx, epsfig, psfrag}
\usepackage{caption}
\usepackage[all]{xy}
\newsavebox{\smatrix}

\def\id{\mathop{\mathrm{Id}}}
\def\Ker{{\rm Ker}}

\def\dist{{\rm dist}}

\def\diag{{\rm Diag}}

\def\d{\mathrm{d}}
\def\odc{[\![}
\def\fdc{]\!]}
\def\ep{\varepsilon}
\def\ccf{\mathbb{C}}
\def\ddf{\mathbb{D}}

\def\nnf{\mathbb{N}}
\def\ppf{\mathbb{P}}
\def\qqf{\mathbb{Q}}
\def\rrf{\mathbb{R}}
\def\ttf{\mathbb{T}}

\def\zzf{\mathbb{Z}}

\def\ac{\mathcal{A}}
\def\bc{\mathcal{B}}

\def\lc{\mathcal{L}}
\def\mc{\mathcal{M}}
\def\tc{\mathcal{T}}

\def\xc{\mathcal{X}}

\begin{document}
\title{Linear toral endomorphisms, exactness, Bernoulliness and generators} 
\author{Christophe Leuridan}
\maketitle
\begin{abstract}
  Let $T$ be a non-invertible surjective continuous endomorphism of the group
  $\ttf^d := \rrf^d/\zzf^d$. Then $T$ preserves the Haar measure,
  is associated to some $d \times d$ matrix $A$ with integer
  coefficients and is $r$-to-one, where $r = |\det A| \ge 2$.

  In~\cite{Krzyzewski}, Krzy\.{z}ewski gives a necessary and
  sufficient condition for $T$ to be exact, and shows that $T$ -- viewed as
  a measure-preserving map -- is isomorphic to the direct product of some
  (possibly trivial) toral automorphism by some exact toral endomorphism.
  We re-prove these results by more elementary and constructive methods,
  whereas Krzy\.{z}ewski actually works with endomorphisms of compact
  Abelian groups and relies on non-constructive theorems.

  Krzy\.{z}ewski's exactness condition and Mihailescu's papers
  ~\cite{Mihailescu 2012} and~\cite{Mihailescu 2013} show that $T$ is
  isomorphic to the one-sided uniform Bernoulli shift on
  $\odc 0,r-1 \fdc^{\zzf_+}$ if and only if $A$ is expanding
  (i.e. the modulus of each eigenvalue is $>1$).

  When $A$ is not expanding and hyperbolic (i.e. all eigenvalues have modulus
  different from $1$), Mihailescu~\cite{Mihailescu 2012} shows that $T$ cannot
  have a generating Rokhlin partition. More generally, when $A$ is not
  expanding (whether hyperbolic or not), we show that the endomorphism $T$
  cannot have a smooth finite generator (whether independent or not).
\end{abstract}

\begin{flushleft}
  MSC Classification : 37A05. \\
  Keywords : linear endomorphisms of the torus, Bernoulli shifts,
  skew products, generating partitions.
\end{flushleft}

\section*{Introduction}

\subsection*{Linear toral endomorphisms}

Let $T$ be a surjective continuous endomorphism of the
torus $\ttf^d := \rrf^d/\zzf^d$. There exists a unique $d \times d$
matrix with integral coefficients $A \in \mc_d(\zzf)$ such that $T$ is
the map $T_A : x \mapsto Ax$ from $\ttf^d$ to itself (note that
the product $Ax$ makes sense since $A$ has integral coefficients).
Let $r := |\det A|$. Then the endomorphism $T$ preserves the Haar
measure $\eta$ on $(\ttf^d,\bc(\ttf^d))$ and is $r$-to-one: for
every $y \in \ttf^d$, the preimage $T^{-1}(\{y\})$ has exactly $r$ elements.

Furthermore, given a uniform random variable $X_0$ taking values in $\ttf^d$
and with law $\eta$, the distribution of $T(X_0)$ is still $\eta$, and for
$\eta$-almost every $y \in \ttf^d$, the conditional law $\lc(X_0|T_A(X_0)=y)$
is uniform on the set $T^{-1}(\{y\})$.
In particular, $T$ is an automorphism of the probability space
$(\ttf^d,\bc(\ttf^d),\eta)$ if and only if $r=1$.

The linear toral automorphisms have been extensively studied, whereas the
linear toral surjective endomorphisms have been studied much less.
From now on, we focus on the ergodic properties of non bijective
endomorphisms and assume that $r := |\det A| \ge 2$. 

A rather simple
application of Fourier series shows that $T$ is ergodic if and only if none
of the $A$'s eigenvalues are roots of unity. 
In 1971, Katznelson~\cite{Katznelson} proved that all
ergodic linear toral automorphisms are Bernoulli, and that the
\emph{natural extension} of any ergodic linear toral endomorphism
is still Bernoulli. Yet, this weaker property does not ensure that the
endomorphism itself is one-sided Bernoulli.

\subsection*{Exactness}

To be one-sided Bernoulli, the endomorphism $T$ must be exact, i.e.
the tail $\sigma$-field $\bigcap_{n \ge 0}T^{-n}\bc(\ttf^d)$ must
be trivial with regard to the Haar measure $\eta$.
For example, when $A$ is a block-diagonal matrix $A = \diag(A_1,A_2)$,
the endomorphism $T$ is isomorphic to the direct product
$T_{A_1} \times T_{A_2}$. If $A_1$, say, has determinant $\pm 1$
(and size $\ge 1$), the endomorphism $T$ cannot be exact since
$T_{A_1}$ is invertible.

In 1993, Krzy\.{z}ewski~\cite{Krzyzewski} provided a necessary and sufficient
condition for $T$ to be exact, given by the decomposition of the characteristic
polynomial $\chi_A$ into irreducible factors (over $\zzf$ or equivalently
over $\qqf$, since $\chi_A$ is monic): the endomorphism $T$ is exact if
and only if the only unimodular (i.e. monic, with integral coefficients and
constant term $\pm 1$) factor of $\chi_A$ is $1$.
Therefore, if $A$ has an eigenvalue with modulus $1$, this eigenvalue
is an algebraic integer, its minimal polynomial divides $\chi_A$ and is
unimodular
(see Lemma~\ref{minimal polynomial of an unimodular algebraic integer}).
Thus $T$ is not exact, so it cannot be one-sided Bernoulli.

Moreover, with the help of Lind Splitting Theorem~\cite{Lind},
Krzy\.{z}ewski also proved that $T$ viewed as a measure-preserving map is
isomorphic to the direct product of some (possibly trivial) toral
\emph{automorphism} and some \emph{exact} surjective endomorphism.
Yet, this isomorphism of dynamical systems is not necessarily an
algebraic isomorphism (i.e. a continuous group isomorphism), and 
algebraic isomorphisms may fail to exist.

Actually, Krzy\.{z}ewski considers also endomorphisms of compact
Abelian groups, and his proof relies heavily on the duality of locally
compact Abelian groups. In the present paper, we focus only on toral
endomorphisms and we give self-contained and more elementary proofs,
analyzing the structure of some subgroups on $\rrf^d$.

\subsection*{Bernoulliness}

Determining whether the endomorphism $T$ is one-sided Bernoulli was a more
difficult problem. The case where $A$ has an eigenvalue with modulus $1$
was settled by the consequence of Krzy\.{z}ewski's exactness condition
mentioned above:
in this case, $T$ is not exact, so it cannot be one-sided Bernoulli.
It therefore remained to study the hyperbolic case (i.e. the case where
$A$ has no eigenvalue with modulus $1$). This work was carried out by
Mihailescu in 2012 and 2013.

On the one hand, in~\cite{Mihailescu 2012}, Mihailescu studies ergodic
properties of equilibrium measures of hyperbolic non-invertible maps
on basic sets with overlaps.
Corollary 1 of~\cite{Mihailescu 2012} states that if $A$ is hyperbolic
but not expanding, then $T$ is not one-sided Bernoulli. 
On the other hand, in~\cite{Mihailescu 2013}, Mihailescu shows that
expanding endomorphisms
on arbitrary tori are one-sided Bernoulli with respect to their corresponding
measure of maximal entropy. In particular, if $A$ is expanding,
$T$ is one-sided Bernoulli.

As a result, $T$ is one-sided Bernoulli if and only if $A$ is expanding. 

\subsection*{Partitions and generators}

To each finite partition $\pi = \{P_1,\ldots,P_s\}$ of $\ttf^d$ into
Borel subsets, we associate the map $\pi : \ttf^d \to \odc 1,s \fdc$
defined by $\pi(x) := i$ whenever $x \in P_i$ and the $\pi$-name map
$\Psi_\pi : \ttf^d \to \odc 1,s \fdc^{\nnf_0}$ defined by $\Psi_\pi(x) :=
\big(\pi(T^n(x))\big)_{n \ge 0}$. 

Many properties of $\pi$ are worth investigating. 
\begin{itemize}
\item The partition $\pi$ is a \emph{(one-sided) generator} of $T$ if the
  partitions $(T^{-n}\pi)_{n \ge 0}$ generate $\bc(\ttf^d)$
  (modulo the null sets), or equivalently, separate the points of
  (a full measure subset of) $\ttf^d$.
\item The translated partitions $(T^{-n}\pi)_{n \ge 0}$ may be independent
  for the probability measure $\eta$.
\item When both conditions above hold, one says that $\pi$ is an
  independent generator.   
\item The partition $\pi$ is a \emph{Rokhlin partition} if for each
  $i \in \odc 1,s \fdc$, the restricted map $T\big|_{P_i}$ is a bijection
  from $P_i$ to $\ttf^d$ (modulo the null sets). 
\item 
  Lind~\cite{Lind} says that $\pi$ is \emph{smooth} when the boundaries
  of $P_1,\ldots,P_s$ are piecewise smooth differentiable submanifolds. 
\end{itemize}
The factor map $\Psi_\pi$ transforms the measure-preserving map $T$ on
$(\ttf^d,\eta)$ into the
one-sided shift operator $S$ on $(\odc 1,s \fdc^{\nnf_0},(\Psi_\pi(\eta))$.
Moreover, 
\begin{itemize}
\item The partition $\pi$ is a generator if and only if 
  $\Psi_\pi$ is an isomorphism of dynamical systems.
\item If the translated partitions $(T^{-n}\pi)_{n \ge 0}$ are independent,
  then $\Psi_\pi(\eta)$ is the product measure $\pi(\eta)^{\otimes \nnf_0}$,
  so $(\odc 1,s \fdc^{\nnf_0},\Psi_\pi(\eta),S)$ is a $1$-sided
  Bernoulli shift.
\item If $\pi$ is an independent generator, then $\pi$ is a Rokhlin partition. 
\item If $\pi$ is a Rokhlin partition, then $T$ is $s$-to-one
  (modulo the null sets), so $s=r$. 
\end{itemize}

\subsection*{Existence or non-existence theorems}

Theorem~1 of~\cite{Mihailescu 2013} states that if $A$ is \emph{expanding}, 
then $T$ is isomorphic to the uniform Bernoulli shift on
$\odc 1,r \fdc^{\nnf_0}$.
Equivalently, $T$ admits an independent generator, which is a partition
into $r$ blocks with measure $1/r$ each. 
Mihailescu's proof is not constructive and relies on a criterion
introduced by Hoffman and Rudolph~\cite{Hoffman - Rudolph}, inspired by
Vershik's standardness criterion \cite{Vershik,Ver95}
and called \emph{tree very weak Bernoulli}.

When $A$ is expanding, an explicit independent generator is known in
many situations. A natural approach to get one is to imitate the
$r$-adic expansion of real numbers. Fix a system $\ddf$ of representatives of
$\zzf^d/A\zzf^d$. Then $|\ddf| = r$. The compact set
$$K_\ddf := \sum_{n \ge 1} A^{-n}\ddf$$
is called a \emph{self-affine tile} since $K_\ddf$ is the almost disjoint
union of the sets $A^{-1}K_\ddf+A^{-1}v$ over all $v \in \ddf$.
Hacon, Saldanah and Veerman note in~\cite{Hacon - Saldanha - Veerman} that the
Lebesgue measure of $K_\ddf$ is always a positive integer.
When it equals $1$, an independent generator can easily be derived
from $K_\ddf$. Most of the time, the blocks of such
a generator have a fractal structure. Unfortunately, it is not always
possible to choose $\ddf$ so that the Lebesgue measure of $K_\ddf$ is $1$,
although it is always possible when $d=2$. Those two facts are proved
in~\cite{Leuridan}, with the help of
a criterion provided by Lagarias and Wang~\cite{Lagarias - Wang}.

By a different approach, Leuridan (Proposition 4 in~\cite{Leuridan})
constructs from any bounded fundamental domain of $\rrf^d/\zzf^d$
a Rokhlin partition $\pi$ such that the translated partitions
$(T^{-n}\pi)_{n \ge 0}$ are independent. Although the paper~\cite{Leuridan}
assumes that $A$ is expanding, this construction still works when $A$
is not expanding.
When the fundamental domain is a parallelepiped, the blocks of $\pi$
are finite unions of polyhedra, so the partition $\pi$ is smooth.  
Under the more restrictive condition $\Vert A^{-1} \Vert < 1$, where
$\Vert\cdot\Vert$ denotes the operator norm associated to the norm on $\rrf^d$
given by the maximum of the absolute values of all coordinates,
Theorem 8 of~\cite{Leuridan} provides an explicit independent
generator in which all blocks are polyhedra.

When $A$ is hyperbolic but not expanding, Mihailescu
(Corollary 2b in \cite{Mihailescu 2012}) provides a negative partial result: 
no Rokhlin partition can be a generator of $T$. 

When $A$ (hyperbolic or not) has an eigenvalue with modulus $<1$, we prove
a negative partial result: the endomorphism $T$ has no smooth
finite generator. More precisely, fix a norm
$|\cdot|$ on $\rrf^d$, and endow $\ttf^d$  with the associated quotient metric. 
For every Borel subset $B$ of $\ttf^d$, and every $\ep>0$, let
$B_\ep := \{x \in \ttf^d : \dist(x,B)<\ep\}$. Call $\eta$ the
Haar measure on $\ttf^d$. 


\begin{theo}\label{no smooth generator}
  Let $\pi = \{P_1,\ldots,P_s\}$ be a finite measurable partition of $\ttf^d$.
  Set $\partial\pi = \partial P_1 \cup \cdots \partial P_s$.
  If $A$ has an eigenvalue with modulus $<1$ and if
  $$\int_0^1 \frac{\eta((\partial\pi)_\ep)}{\ep} \d\ep < +\infty,$$
  the partition $\pi$ cannot be a generator of $T$. 
\end{theo}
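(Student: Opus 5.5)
We argue by contradiction: assume $\pi$ is a generator and exploit the contracting direction. Let $\lambda$ be an eigenvalue of $A$ with $|\lambda|<1$. We pick a nonzero vector $u \in \rrf^d$ (real eigenvector if $\lambda$ is real, or a vector in the real $2$-plane associated to $\lambda,\bar\lambda$ otherwise) such that $|A^n u| \le C\rho^n |u|$ for all $n\ge 0$, with $C>0$ and $|\lambda|\le\rho<1$. Small translations $x \mapsto x+tu$ with $t$ small then move the orbit only slightly: $T^n(x+tu) = T^n x + tA^n u$, and $|tA^nu| \le C\rho^n|t||u|$. The idea is that for a typical $x$, the points $x$ and $x+tu$ have the same $\pi$-name for a positive-measure set of pairs. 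This contradicts the assumption that the partitions $(T^{-n}\pi)_{n\ge0}$ separate points of a full-measure set.

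The key estimate goes as follows. Fix $t\neq 0$ small and set $\ep_n := C\rho^n |t||u|$. If $\pi(T^nx)\neq\pi(T^n(x+tu))$, then the segment from $T^nx$ to $T^nx+tA^nu$, of length $\le\ep_n$, meets $\partial\pi$. Hence $T^nx\in(\partial\pi)_{\ep_n}$ (up to replacing $\ep_n$ by $2\ep_n$ if the segment crosses without touching, which is harmless). Since $T$ preserves $\eta$,
$$\eta\{x : \Psi_\pi(x)\neq\Psi_\pi(x+tu)\} \le \sum_{n\ge0}\eta\big(T^{-n}(\partial\pi)_{\ep_n}\big) = \sum_{n\ge0}\eta\big((\partial\pi)_{\ep_n}\big).$$
Because $\ep\mapsto\eta((\partial\pi)_\ep)$ is nondecreasing and $\ep_n$ decreases geometrically, this series is comparable to $\frac{1}{\log(1/\rho)}\int_0^{\ep_0}\frac{\eta((\partial\pi)_\ep)}{\ep}\,\d\ep$, up to a finite first term. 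The comparison uses $\int_{\ep_{n+1}}^{\ep_n}\d\ep/\ep=\log(1/\rho)$. The hypothesis therefore makes the sum finite, and it tends to $0$ as $t\to0$ by dominated convergence (its tail). So for $|t|$ small enough, the set $G_t:=\{x:\Psi_\pi(x)=\Psi_\pi(x+tu)\}$ has measure $>1/2$, in fact close to $1$.

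To conclude, the generator property gives a full-measure set $E$ on which $\Psi_\pi$ is injective. Since translation by $tu$ preserves $\eta$, the set $E\cap(E-tu)\cap G_t$ has positive measure. For $x$ in it, $x$ and $x+tu$ are two points of $E$ with the same name, so $x=x+tu$ in $\ttf^d$, that is, $tu\in\zzf^d$. This fails for all but countably many $t$, and we choose a small $t$ outside this countable set, a contradiction.

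The main obstacle I expect is making the boundary-crossing step rigorous for arbitrary measurable $P_i$. A segment joining points of different blocks need not meet the topological boundaries of the blocks in an obvious way unless we argue carefully. Since $\ttf^d$ is connected along the segment, the segment is the disjoint union of the sets $P_i\cap\text{segment}$; if it touched no $\partial P_i$, each $P_i$ would be relatively open in it (its points would be interior to $P_i$), contradicting connectedness. So the segment meets $\partial\pi$, and $T^nx\in(\partial\pi)_{\ep_n}$ with a strict inequality after enlarging $\ep_n$ slightly. This settles it, but it needs to be stated cleanly. A secondary concern is that the identity $\eta(T^{-n}B)=\eta(B)$ is what allows the sum to avoid any dependence on expansion in other directions.
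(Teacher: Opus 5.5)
Your proof is correct and is essentially the paper's argument: a contracting direction, the identity $T^n(x+v)=T^nx+A^nv$, $T$-invariance of $\eta$, and finiteness of $\sum_n \eta((\partial\pi)_{c\rho^n})$, obtained by comparing this series with $\int \eta((\partial\pi)_\varepsilon)\,\mathrm{d}\varepsilon/\varepsilon$. The only difference is organizational. The paper applies Borel--Cantelli to get, for $\eta$-a.e.\ $x$, a radius $\rho(x)>0$ below which every displacement in the stable subspace preserves the whole $\pi$-name. You instead fix the displacement $tu$ and use a union bound. This lets you spell out the connectedness step and the final \emph{not a generator} argument (injectivity on a full-measure set plus translation invariance), which the paper leaves implicit. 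One small point: the $n=0$ term $\eta((\partial\pi)_{2\varepsilon_0})$ is not part of the integral's tail, so for it to tend to $0$ you should note that the hypothesis forces $\eta(\partial\pi)=0$.
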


By the equivalence of all norms on $\rrf^d$, the condition of
Theorem~\ref{no smooth generator} does not depend on the choice of the norm. 
Since $\eta((\partial\pi)_\ep) \to \eta(\partial\pi)$ as $\ep \to 0$,
this condition forces $\eta(\partial\pi)$ to be $0$, but the convergence of
$\eta((\partial \pi)_\ep)$ to $0$ can be very slow: assuming that  
$\eta((\partial \pi)_\ep) = O(|\ln \ep|^{-\alpha})$ as $\ep \to 0$
for some $\alpha>1$ is sufficient. Therefore, assuming that the upper box
dimension of $\partial \pi$ is $<d$, i.e. 
$\eta((\partial \pi)_\ep) = O(\ep^{\alpha})$ as $\ep \to 0$,
for some $\alpha > 0$, is sufficient. 

Theorem~\ref{no smooth generator} can be compared with a theorem of Lind,
mentioned at page 99 of Bowen's paper~\cite{Bowen}: a linear hyperbolic
\emph{automorphism} of $\ttf^d$ cannot have a \emph{smooth partition}
which is a Bernoulli generator. Bowen used the terminology \emph{Anosov}
instead of \emph{hyperbolic}.

\section{Proof of Theorem~\ref{no smooth generator}}

Since $A$ is not expanding, it has an eigenvalue $\alpha \in \ccf$
such that $|\alpha|<1$. Set $Q = X-\alpha$ if $\alpha$ is real
and $Q = (X-\alpha)(X-\overline{\alpha})$ otherwise. 
In both cases, $Q$ is irreducible in $\rrf[X]$ and the vector subspace
$S := \Ker~Q(A)$ is nonzero and can be written
as a direct sum of cyclic vector subspaces of the linear map
$x \mapsto Ax$. Therefore, we may
- and we do - endow $\rrf^d$ with a norm $|\cdot|$ such that
$|Ax| = |\alpha| \times |x|$ for all $x \in S$. We endow $\ttf^d$  
with the associated quotient metric. 

For all $x \in \ttf^d$, let $R(x) := \dist(x,\partial \pi)$ and   
$\rho(x) := \inf\{|\alpha|^{-n}R(T^n(x)) : n \ge 0 \}$.
Since $\eta(\partial\pi) = 0$, one has $R(x)>0$ for $\eta$-almost
every $x \in \ttf^d$.

We claim that we have also $\rho(x)>0$ for $\eta$-almost every $x \in \ttf^d$. 
Indeed, let $X$ be a $\ttf^d$-valued random variable with distribution
$\eta$, defined on some probability space $(\Omega,\ac,\ppf)$.
For every integer $n \ge 0$, the distribution of $T^n(X)$ is $\eta$,
so
$$\ppf\big[R(T^n(X)) = 0\big] = \eta(\partial\pi) = 0,$$ 
i.e.  $R(T^n(X))>0$ almost surely. Furthermore,
$$\ppf\big[|\alpha|^{-n}R(T^n(X))<1\big]
= \eta\big(\{x \in \ttf^d : R(x) < |\alpha|^n\}\big)
= \eta((\partial \pi)_{|\alpha|^n}).$$
The assumptions of Theorem~\ref{no smooth generator} ensure
the convergence of series $\sum_n \eta((\partial \pi)_{|\alpha|^n})$.
By Borel--Cantelli Lemma, almost surely,
$|\alpha|^{-n}R(T^n(X)) \ge 1$ for every large enough $n$. 
As a result, $\rho(X)>0$ almost surely, which proves the claim.

Last, given $x \in \ttf^d$ such that $\rho(x)>0$, we show that
the partition $\bigvee_{n \ge 0}T^{-n}\pi$ does not distinguish $x$
from points of the form $x+v$ with $v \in S$ such that
$|v|<\rho(x)$. Indeed, given such a vector $v$, we have
for every $n \ge 0$, $T^n(x+v) = T^n(x)+A^nv$ and
$|A^nv| = |\alpha|^n|v| < |\alpha|^n\rho(x) \le R(T^n(x))$,   
so $T^n(x+v)$ and $T^n(x)$ belong to the same block of the
partition $\pi$. We are done.

\section{Simpler proof of Krzy\.zewski's theorems}

Before stating and proving Krzy\.{z}ewski's theorems, we need a definition.

\begin{defi}
  A monic polynomial $f$ with integral coefficients is
  \emph{unimodular} if its constant term is $\pm 1$.
\end{defi}

Note that a product of polynomials with integral coefficients is
unimodular if and only if all factors are unimodular.

\subsection{Statements and steps of the proof}

First, we establish (Proposition~\ref{criterion via the density of Gamma})
a rather simple criterion.
Let $\Gamma = \bigcup_{n \ge 0} A^{-n}\zzf^d$. Then $\Gamma$ is a subgroup 
of $\rrf^d$.
Furthermore, $T$ is exact if and only if $\Gamma$ is dense in $\rrf^d$.

Then, we factor the characteristic polynomial as $\chi_A= f_1f_2$, where
$f_1$ is the product of its unimodular irreducible factors and 
$f_2$ the product of its non-unimodular irreducible factors.
We have $\rrf^d = K_1 \oplus K_2$,
where $K_i := \Ker f_i(A)$ for $i \in \{1,2\}$.
Call $p_1$ the projection onto $K_1$ along $K_2$. We prove that
$\overline{\Gamma} = p_1(\zzf^d) \oplus K_2$
(Proposition~\ref{closure of Gamma}). 
Moreover, the subgroup $p_1(\zzf^d)$ is a lattice of $K_1$. 

From Propositions~\ref{criterion via the density of Gamma} 
and~\ref{closure of Gamma}, we derive Krzy\.{z}ewski's criterion. 

\begin{theo}\label{Krzyzewski's condition for exactness}
  {\bf Krzy\.{z}ewski~\cite{Krzyzewski}.}
  $T$ is exact if and only if the only 
  unimodular factor of the characteristic polynomial $\chi_A$ is $1$. 
\end{theo}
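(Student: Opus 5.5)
The theorem will follow by combining Proposition~\ref{criterion via the density of Gamma} ($T$ is exact if and only if $\Gamma=\bigcup_{n\ge0}A^{-n}\zzf^d$ is dense in $\rrf^d$) with Proposition~\ref{closure of Gamma} ($\overline{\Gamma}=p_1(\zzf^d)\oplus K_2$, with $p_1(\zzf^d)$ a lattice of $K_1$). As in the outline above, write $\chi_A=f_1f_2$, where $f_1$ is the product of the unimodular irreducible factors of $\chi_A$ and $f_2$ the product of the non-unimodular ones, and set $K_i=\Ker f_i(A)$.

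First I translate density into a condition on $K_1$. If $K_1=\{0\}$, then $K_2=\rrf^d$ and $\overline\Gamma=\rrf^d$. If $K_1\neq\{0\}$, then $p_1(\zzf^d)$ is a lattice of $K_1$, hence discrete in $K_1$. So $p_1(\zzf^d)\oplus K_2$ is a proper closed subgroup of $\rrf^d$: its image under $p_1$ is discrete, not all of $K_1$. Thus $\Gamma$ is dense if and only if $K_1=\{0\}$. Since $\dim K_1=\deg f_1$ (the primary decomposition, as $f_1,f_2$ are coprime), this holds if and only if $f_1=1$.

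It remains to show that $f_1=1$ is equivalent to ``the only unimodular factor of $\chi_A$ is $1$''. If some unimodular monic $g\neq1$ divides $\chi_A$, factor $g$ into monic irreducible integer polynomials by Gauss's lemma. By the remark after the definition, each of these factors is unimodular, and each divides $\chi_A$, so it is one of the irreducible factors of $\chi_A$. Hence $f_1\neq1$. Conversely, if $f_1\neq1$, then $f_1$ is itself a nontrivial unimodular factor. The coprimality of $f_1$ and $f_2$ is needed for the direct sum: they share no irreducible factor because each irreducible factor is either unimodular or not.

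The main obstacle lies entirely in Proposition~\ref{closure of Gamma}, especially in showing that $K_2\subset\overline\Gamma$ and that $p_1(\zzf^d)$ is discrete. Since these are assumed, the deduction above is routine. The only care needed is the discreteness point: a lattice of $K_1$ is a proper closed subgroup exactly when $K_1\neq\{0\}$.
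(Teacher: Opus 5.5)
Your proposal is correct and follows the paper's route: the paper derives the theorem in one line from Proposition~\ref{criterion via the density of Gamma} and Proposition~\ref{closure of Gamma}. You have spelled out the routine details the paper leaves implicit, namely that $p_1(\zzf^d)\oplus K_2=\rrf^d$ iff $K_1=\{0\}$ iff $f_1=1$, and that $f_1=1$ iff $\chi_A$ has no nontrivial unimodular factor.
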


Let $p_{\ttf^d}$ denote the canonical projection from $\rrf^d$ onto
$\ttf^d$ and consider the subgroups $H_i := p_{\ttf^d}(K_i)$ for
$i \in \{1,2\}$. The equality $\rrf^d = K_1 \oplus K_2$ yields
$\ttf^d = H_1 + H_2$. Yet, this latter sum is not necessarily 
direct, and this causes difficulties. 

We prove that $H_2$ is a subtorus of $\ttf^d$ and that $\ttf^d/H_2$
is a torus.
Taking quotients, we prove $T$ yields an \emph{automorphism}
$T_{\ttf^d/H_2}$ of $\ttf^d/H_2$, whereas $T$ induces an \emph{exact}
endomorphism $T_{H_2}$ of $H_2$. See Proposition~\ref{more on Gamma and H}.

Next, we represent $T$ as a skew product of $T_{\ttf^d/H_2}$ and $T_{H_2}$
(Proposition~\ref{skew product}). 
Finally, we establish Theorem~\ref{splitting theorem}
-- which is a variant of a thoerem of Lind --
to remove the cocycle and turn this skew product into the direct product
of the automorphism $T_{\ttf^d/H_2}$ and the exact endomorphism $T_{H_2}$.

Actually, Proposition~\ref{skew product},
Lemma~\ref{Borel choice of representative} and
Theorem~\ref{splitting theorem} show that it suffices to choose
fundamental domains $K_1/p_1(\zzf^d)$ and $K_2/(K_2 \cap \zzf^d)$
to obtain the explicit isomorphisms of dynamical systems
$$T \quad
\simeq \quad T_{\ttf^d/H_2} \times_\alpha T_{H_2} \quad 
\simeq \quad T_{\ttf^d/H_2} \times T_{H_2}.$$

\subsection{First condition for $T$ to be exact}

Call $p_{\ttf^d}$ the canonical projection from $\rrf^d$ to
$\ttf^d = \rrf^d/\zzf^d$. Set
$$\tc := \bigcap_{n \ge 0}T^{-n}\bc(\ttf^d),
\quad \Gamma := \bigcup_{n \ge 0} A^{-n}\zzf^d
\text{ and }
H := \bigcup_{n \ge 0} \Ker~T^n = p_{\ttf^d}(\Gamma).$$
Then $\Gamma$ is a subgroup of $\rrf^d$ as the union of 
an increasing sequence of subgroups of $\rrf^d$, and 
$H$ is a subgroup of $\ttf^d$.

The Borel functions on the torus $\ttf^d$ (with values in $\rrf$, say)
are exactly the functions obtained by quotienting all Borel
$\zzf^d$-periodic functions on $\rrf^d$. Given $n \ge 0$,
one checks that the $T^{-n}\bc(\ttf^d)$-measurable functions on $\ttf^d$
are exactly the $(\Ker~T^n)$-periodic Borel functions on $\ttf^d$,
equivalently the functions obtained by quotienting (by $\zzf^d$) all Borel
$A^{-n}\zzf^d$-periodic functions on $\rrf^d$.

Hence, the $\tc$-measurable functions on $\ttf^d$ are exactly the
$H$-periodic Borel functions on $\ttf^d$, equivalently the functions
obtained by quotienting (by $\zzf^d$) all Borel $\Gamma$-periodic
functions on $\rrf^d$. This observation is the first step to get a
necessary and sufficient condition for $T$ to be exact.

\begin{prop}\label{criterion via the density of Gamma}
  The group $p_{\ttf^d}(\overline{\Gamma})$
  is compact and is the closure of $H = p_{\ttf^d}(\Gamma)$.
  Furthermore, the following statements are equivalent:
  \begin{enumerate}
  \item $T$ is exact;
  \item $H$ is dense in $\ttf^d$;
  \item $\Gamma$ is dense in $\rrf^d$.
  \end{enumerate}
\end{prop}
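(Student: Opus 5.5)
The plan has two parts: a topological one about $p_{\ttf^d}(\overline{\Gamma})$ and $\overline{H}$, and a measure-theoretic one identifying the tail $\sigma$-field $\tc$ with the $\sigma$-field of $\overline{H}$-invariant sets.

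\emph{Topological part.} Since $\Gamma \supset \zzf^d$, the closed subgroup $\overline{\Gamma}$ is $\zzf^d$-invariant, so $\overline{\Gamma} = p_{\ttf^d}^{-1}\big(p_{\ttf^d}(\overline{\Gamma})\big)$. Because $p_{\ttf^d}$ is a quotient map, $p_{\ttf^d}(\overline{\Gamma})$ is then closed in the compact space $\ttf^d$, hence compact. Continuity gives $p_{\ttf^d}(\overline{\Gamma}) \subset \overline{p_{\ttf^d}(\Gamma)} = \overline{H}$. Since the image is a closed set containing $H$, equality follows. This also yields (2)$\Leftrightarrow$(3): $H$ is dense if and only if $p_{\ttf^d}(\overline{\Gamma}) = \ttf^d$, if and only if $\overline{\Gamma} + \zzf^d = \overline{\Gamma} = \rrf^d$.

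\emph{Measure-theoretic part.} By the observation preceding the proposition, $\tc$-measurable functions are exactly the Borel functions invariant under translations by $H$. The convention there is everywhere periodic Borel functions. Exactness, however, is triviality modulo null sets, so I would first recast the observation in the a.e.\ sense. For each $n$, $T^{-n}\bc(\ttf^d)$ consists exactly of Borel sets that are invariant under the finite group $\Ker T^n$. Hence a set that is a.e.\ equal to a $T^{-n}\bc$-set is a.e.\ invariant under $\Ker T^n$. Conversely, if $B$ is a.e.\ invariant under the finite group $\Ker T^n$, then $\bigcap_{h \in \Ker T^n}(B+h)$ is exactly invariant and a.e.\ equal to $B$. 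Thus, modulo null sets, $\tc$ is the class of Borel sets $B$ with $\eta(B \,\triangle\, (B+h)) = 0$ for all $h \in H$.

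(2)$\Rightarrow$(1): let $B$ be such a set and set $f = 1_B$. The map $h \mapsto \Vert f(\cdot + h) - f\Vert_{L^1(\eta)}$ is continuous on $\ttf^d$ (continuity of translation in $L^1$) and vanishes on the dense set $H$, hence everywhere. So $f$ is a.e.\ invariant under all translations of $\ttf^d$. Its Fourier coefficients then satisfy $\hat f(k)\big(e^{2i\pi \langle k,h\rangle}-1\big)=0$ for all $h$, so $\hat f(k)=0$ for $k \neq 0$. Therefore $f$ is a.e.\ constant, that is, $\eta(B) \in \{0,1\}$.

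(1)$\Rightarrow$(2): argue by contraposition. Suppose $K := \overline{H}$ is a proper closed subgroup of $\ttf^d$. Choose a nonzero character $\chi(x) = e^{2i\pi\langle k, x\rangle}$ that is trivial on $K$. This is the one genuinely nontrivial step, and I would handle it by hand rather than invoking Pontryagin duality. The preimage $L = p_{\ttf^d}^{-1}(K) = \overline{\Gamma}$ is a closed subgroup of $\rrf^d$, so it has the form $V \oplus D$ with $V$ a vector subspace and $D$ discrete. Since $L \neq \rrf^d$ and $L \supset \zzf^d$, either $V$ is a proper subspace, or $L/V$ is a proper lattice-type quotient. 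In either case I would find a nonzero integer vector $k$ with $\langle k, L\rangle \subset \zzf$, namely one in the dual lattice of $L$, which contains $\zzf^d$ and is therefore a sublattice of $\zzf^d$ of positive rank. Then $\chi$ is continuous, $H$-periodic and nonconstant. By the observation it is $\tc$-measurable, so $\tc$ is nontrivial and $T$ is not exact. The main obstacle is precisely this existence of $k$, i.e.\ showing that the dual $\{k \in \zzf^d : \langle k, \overline{\Gamma}\rangle \subset \zzf\}$ is nonzero when $\overline{\Gamma} \neq \rrf^d$. I would prove it using the structure theorem for closed subgroups of $\rrf^d$, together with the rationality of $V$: $V + \zzf^d$ is closed, which forces $V$ to be spanned by integer vectors.
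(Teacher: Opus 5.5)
Your proof is correct, but for (1)$\Rightarrow$(2) you take a noticeably heavier route than the paper. The paper needs no character. When $H$ is not dense, it uses the continuous function $x\mapsto\dist(x,H)$ for the translation-invariant quotient metric. This function is $H$-periodic (since $H-h=H$ for $h\in H$), hence $\tc$-measurable by the observation preceding the proposition. It is not a.e.\ constant, since it vanishes at $0$ and is positive on the nonempty open set $\ttf^d\setminus\overline{H}$. This bypasses entirely what you call the one genuinely nontrivial step.

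Your dual-lattice argument does work, and you do not even need the rationality of $V$. Since $V\subset\overline{\Gamma}$, any $k$ with $\langle k,\overline{\Gamma}\rangle\subset\zzf$ lies in $V^\perp$. So the set of such $k$ is the dual, inside $V^\perp$, of the orthogonal projection of $\overline{\Gamma}$ onto $V^\perp$, which is a lattice of $V^\perp$. It therefore has rank $d-\dim V\ge 1$, and it is contained in $\zzf^d$ because $\overline{\Gamma}\supset\zzf^d$. What your version buys is a sharper conclusion: an explicit nonzero $k\in\zzf^d$ such that $e^{2i\pi\langle k,\cdot\rangle}$ is $\tc$-measurable. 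This is the duality picture Krzy\.{z}ewski works with, whereas the paper only needs some nonconstant tail function.

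The remaining differences are minor. You obtain closedness of $p_{\ttf^d}(\overline{\Gamma})$ from saturation and the quotient topology, whereas the paper writes it as the image of the compact set $\overline{\Gamma}\cap[0,1]^d$. In (2)$\Rightarrow$(1) you supply, via Fourier coefficients, the step ``invariant under all translations implies a.e.\ constant'' that the paper leaves implicit.

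Your a.e.\ recasting of $\tc$ is more than the proof uses. (2)$\Rightarrow$(1) only needs that tail sets are exactly $H$-invariant, and the test function in (1)$\Rightarrow$(2) is exactly $H$-periodic. If you keep the recasting, note that passing from the levels $T^{-n}\bc(\ttf^d)$ to $\tc$ needs an extra fact you did not state. For a decreasing sequence of $\sigma$-fields, the intersection of the completions equals the completion of the intersection, which follows from a $\limsup$ argument.
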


\begin{proof}
  Since $p_{\ttf^d}$ is a continuous group morphism and since
  $\overline{\Gamma}$ is a closed subgroup containing $\zzf^d$, the set
  $p_{\ttf^d}(\overline{\Gamma}) = p_{\ttf^d}(\overline{\Gamma} \cap [0,1]^d)$
  is compact and is the closure of $H = p_{\ttf^d}(\Gamma)$.
  Hence, $H$ is dense in $\ttf^d$ if and only if $\overline{\Gamma}$
  contains at least one point in each equivalence class modulo $\zzf^d$,
  which is equivalent to $\overline{\Gamma} = \rrf^d$.
  The equivalence of items 2 and 3 follows.

  Let $f : \ttf^d \to \rrf$ be any bounded $\tc$-measurable function.
  Then the map $h \mapsto f(\cdot-h)$ from $\ttf^d$ to $L^1(\eta)$
  is continuous and $H$-periodic. If $H$ is dense in $\ttf^d$,
  this map is constant, so $f$ is almost everywhere constant,
  which shows that the tail $\sigma$-field $\tc$ is trivial.

  Conversely, if $H$ is not dense in $\ttf^d$, the tail $\sigma$-field
  $\tc$ is not trivial: given any norm on $\rrf^d$, the associated
  quotient metric on $\ttf^d$ is invariant by translations, so the
  continuous function $\dist(\cdot,H)$ is $H$-periodic but not constant. 
  The equivalence of items 1 and 2 follows. 
\end{proof}

\subsection{Description of the closure of $\Gamma$}

We consider the factorization of $\chi_A$ into monic irreducible
(in $\zzf[X]$, or equivalently in $\qqf[X]$) polynomials.
Call $f_1$ the product of its unimodular irreducible factors and 
$f_2$ the product of its non-unimodular irreducible factors,
counted with their multiplicities. From this factorisation,
we get a complete description of the closure of $\Gamma$ and other
useful properties.

\begin{prop}\label{closure of Gamma}  
  Let $K_1 := \Ker f_1(A)$ and $K_2 := \Ker f_2(A)$. Then
  \begin{enumerate}
  \item One has $\rrf^d = K_1 \oplus K_2$. The associated projections
    $p_1$ and $p_2$ onto $K_1$ and $K_2$ are polynomials of the
    endomorphism $u : x \mapsto Ax$ with rational coefficients. 
    Thus the matrices of $p_1$ and $p_2$ in the canonical basis
    have rational entries.

  \item The subspaces $K_1$ and $K_2$ are stable by the endomorphism
    $u : x \mapsto Ax$ of $\rrf^d$.
    The characteristic polynomial of the endomorphisms $u_1$ and $u_2$
    induced by
    $u$ on $K_1$ and $K_2$ are $f_1$ and $f_2$. In particular,
    $\det u_1 = \pm 1$. 
  \item One has $\overline{\Gamma} = p_1(\zzf^d) \oplus K_2$ and
    $p_1(\overline{\Gamma}) = p_1(\Gamma) = p_1(\zzf^d)$.
    Moreover, the subgroup $p_1(\zzf^d)$ is a lattice of $K_1$. 
  \item One has $p_{\ttf^d}(\overline{\Gamma}) = p_{\ttf^d}(K_2)$.   
  \end{enumerate}
\end{prop}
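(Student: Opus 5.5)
Items 1 and 2 are standard linear algebra over $\qqf$. Since $f_1$ and $f_2$ share no irreducible factor, they are coprime in $\qqf[X]$. So there exist $U,V \in \qqf[X]$ with $Uf_1+Vf_2=1$. Cayley--Hamilton gives $f_1(A)f_2(A)=0$, and the kernel lemma then yields $\rrf^d=K_1\oplus K_2$, with projections $p_1=(Vf_2)(u)$ and $p_2=(Uf_1)(u)$. These are polynomials in $u$ with rational coefficients, so their matrices are rational. Both $K_i$ are $u$-stable because they are kernels of polynomials in $u$. The characteristic polynomial of $u_i$ is annihilated by $f_i$-type factors only, and the product of the two characteristic polynomials is $\chi_A=f_1f_2$, so they are $f_1$ and $f_2$. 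Since $f_1$ is unimodular, $\det u_1=\pm f_1(0)=\pm1$.

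For item 3, first I show that $p_1(\zzf^d)$ is a lattice of $K_1$. Choose $N\ge 1$ with $Np_1$ an integer matrix; then $p_1(\zzf^d)\subset N^{-1}\zzf^d\cap K_1$, which is discrete. It spans $K_1$ because $p_1(\rrf^d)=K_1$, so it is a lattice. Next, $A_1:=u_1$ has $\det=\pm1$ and preserves this lattice, since $u$ commutes with $p_1$ and $A\zzf^d\subset\zzf^d$. The restriction is injective on a lattice with determinant $\pm1$, hence $A_1p_1(\zzf^d)=p_1(\zzf^d)$. Therefore $p_1(A^{-n}\zzf^d)=u_1^{-n}p_1(\zzf^d)=p_1(\zzf^d)$, and $p_1(\Gamma)=p_1(\zzf^d)$. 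It follows that $\Gamma\subset p_1(\zzf^d)\oplus K_2$, which is closed since $p_1(\zzf^d)$ is discrete in $K_1$. Hence $\overline\Gamma\subset p_1(\zzf^d)\oplus K_2$ and $p_1(\overline\Gamma)=p_1(\zzf^d)$.

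The main obstacle is the reverse inclusion $K_2\subset\overline\Gamma$. Once it holds, $\overline\Gamma\supset\zzf^d+K_2=p_1(\zzf^d)\oplus K_2$, because $x=p_1x+p_2x$.

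The closed subgroup $G:=\overline\Gamma\cap K_2$ is $u_2$-stable in both directions, since $A\Gamma=\Gamma+\ldots\subset\Gamma$ and $A^{-1}\Gamma\subset\Gamma$. By the structure theorem for closed subgroups of $\rrf^m$, its identity component $V=G^0$ is a $u_2$-stable subspace, and $G/V$ is discrete in $K_2/V$. It also contains $L:=\zzf^d\cap K_2$, which is a lattice of $K_2$ because $K_2$ is a rational subspace. Thus $G/V$ is a lattice $\Lambda$ in $W:=K_2/V$ containing the image of $L$, on which $\bar u_2$ acts with $\bar u_2^{-1}\Lambda\subset\Lambda$ (using $A^{-1}\Gamma\subset\Gamma$). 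It also acts with $\bar u_2\Lambda\subset\Lambda$. Since $\bar u_2$ preserves a lattice in both directions, its determinant is $\pm1$. Moreover, $V$ should be rational: I would check this via $V=\bigcap$ of kernels of characters, or via $V\supset$ the span of the accumulating rational directions. Then the characteristic polynomial of $\bar u_2$ is a rational, integer-coefficient unimodular factor of $f_2$. By the note that a product is unimodular only if all factors are, and because $f_2$ has no unimodular irreducible factor, this forces $W=0$, i.e. $K_2=V\subset\overline\Gamma$.

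The delicate point is justifying that the induced map on $W$ has an integral monic characteristic polynomial. I would get this from $\Lambda$ being a $\bar u_2$-stable lattice: the matrix of $\bar u_2$ in a basis of $\Lambda$ is integral, with integral inverse, so its characteristic polynomial lies in $\zzf[X]$ and is unimodular. It also divides $f_2$ over $\rrf$, hence over $\qqf$ by Gauss, so every irreducible factor is a unimodular factor of $f_2$, which is a contradiction unless $W=0$.

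Finally, item 4 follows from item 3: $p_{\ttf^d}(\overline\Gamma)=p_{\ttf^d}(p_1(\zzf^d)+K_2)$, and $p_1(z)=z-p_2(z)\in\zzf^d+K_2$, giving $p_{\ttf^d}(K_2)$.
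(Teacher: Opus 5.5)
Your proof is correct. Items 1, 2, 4 and the inclusion $\overline{\Gamma}\subset p_1(\zzf^d)\oplus K_2$ follow the paper. The paper gets $p_1(\Gamma)=p_1(\zzf^d)$ from Lemma~\ref{condition for Gamma to be a lattice} applied to $u_1$ on the lattice $p_1(\zzf^d)$, which is your unimodular-lattice argument.

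The real difference is how you get $K_2\subset\overline{\Gamma}$.
\begin{itemize}
\item The paper \emph{projects}. The set $p_2(\Gamma)=\bigcup_{n\ge 0}u_2^{-n}(p_2(\zzf^d))$ is a group of the same type for $u_2$ acting on the lattice $p_2(\zzf^d)$ of $K_2$. So Lemma~\ref{sufficient condition for Gamma to be dense} makes it dense in $K_2$. The paper then transfers this back using $qp_2\in\zzf[u]$: $K_2=q\,\overline{p_2(\Gamma)}\subset\overline{qp_2(\Gamma)}\subset\overline{\Gamma}$.
\item You \emph{intersect}. The set $G=\overline{\Gamma}\cap K_2$ is already a closed subgroup of $K_2$, invariant under $u_2^{\pm 1}$, and it contains the lattice $\zzf^d\cap K_2$. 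You then run the lemma's argument directly on $G$: quotient by the largest subspace $G^0$ (your $V$), obtain a $\bar u_2^{\pm1}$-stable lattice $G/G^0$, hence a unimodular integral factor of $f_2$, which forces $G^0=K_2$.
\end{itemize}
The core mechanism is identical. Your version avoids the lifting step through the integer $q$, at the cost of using rationality of $K_2$ to know that $\zzf^d\cap K_2$ is a lattice. The paper's version keeps the density lemma stated for an arbitrary lattice-preserving $u$ and simply applies it to $u_2$.

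Two small cleanups:
\begin{itemize}
\item Delete your remark that $G^0$ ``should be rational''. It is neither proved nor needed. Integrality of $\chi_{\bar u_2}$ comes from the stable lattice $G/G^0$. Divisibility of $f_2$ by $\chi_{\bar u_2}$ in $\zzf[X]$ follows from Euclidean division by a monic integral polynomial.
\item Fix the justification of $u_1(p_1(\zzf^d))=p_1(\zzf^d)$. The correct reason is that the integral matrix of $u_1$ in a $\zzf$-basis of this lattice has determinant $\pm 1$, hence an integral inverse. Injectivity is not the point.
\end{itemize}
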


\begin{proof}
Proposition~\ref{closure of Gamma} is a direct application of
proposition~\ref{structure of Gamma} to $V = \rrf^d$, $\Lambda = \zzf^d$
and to the endomorphism $u : x \mapsto Ax$ of $\rrf^d$.
See section~\ref{Annex}.
\end{proof}

\subsection{Factorisation of $T$ as a skew product}


Keep the notations of the previous subsections and set
$H_i := p_{\ttf^d}(K_d)$ for $i \in \{1,2\}$. Since $A\Gamma = \Gamma$, we have
$A\overline{\Gamma} = \overline{\Gamma}$. Applying $p_{\ttf^d}$
to both sides yields $T(\overline{H}) = \overline{H} = H_2$, thanks to
item 4 of Proposition~\ref{closure of Gamma}.  
Hence $H_2$ is compact, $T$ induces an endomorphism $T_{H_2}$ of the
compact group $H_2$, and provides by quotienting an endomorphism
$T_{\ttf^d/H_2}$ of $\ttf^d/H_2$.
By Lemma~\ref{image of Haar measure by a surjective morphism}, the
surjective continuous group endomorphisms $T_{H_2}$ and $T_{\ttf^d/H_2}$
preserve the Haar measures $\eta_{H_2}$ and $\eta_{\ttf^d/H_2}$.

\begin{prop}\label{more on Gamma and H}
  Keep the notations of Proposition~\ref{closure of Gamma}, and call
  $d_1 = \dim K_1 \in \odc 0,d-1 \fdc$ and $d_2 = \dim K_2 = d-d_1$.
  Then
  \begin{enumerate}
  \item The compact group $\ttf^d/H_2$ is canonically isomorphic
    to $\rrf^d/\overline{\Gamma}$ and to $K_1/p_1(\zzf^d)$, and
    is isomorphic to $\ttf^{d_1}$.
  \item The endomorphism $T_{\ttf^d/H_2}$ is an automorphism.
  \item The surjective endomorphism $T_{H_2}$ is exact
    (with regard to $\eta_{H_2}$).
  \item The compact group $H_2$ is canonically isomorphic to
    $K_2/(K_2 \cap \zzf^d)$ and is isomorphic to $\ttf^{d_2}$. 
  \end{enumerate}
\end{prop}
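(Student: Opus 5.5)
We plan to obtain all four items from Proposition~\ref{closure of Gamma}, mostly through explicit group isomorphisms. We treat them in the order 1, 4, 2, 3.

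\emph{Item 1.} We have $\ttf^d/H_2 = (\rrf^d/\zzf^d)/(p_{\ttf^d}(K_2))$. Since $H_2 = p_{\ttf^d}(\overline{\Gamma})$ by item 4 of Proposition~\ref{closure of Gamma}, and $\overline{\Gamma} \supset \zzf^d$, the third isomorphism theorem gives $\ttf^d/H_2 \cong \rrf^d/(\zzf^d+K_2) = \rrf^d/\overline{\Gamma}$. This isomorphism is continuous and open, hence topological, because the quotient maps are open. Next we compose with $p_1$, a surjective continuous map $\rrf^d \to K_1$ with kernel $K_2$. It maps $\overline{\Gamma} = p_1(\zzf^d)\oplus K_2$ onto $p_1(\zzf^d)$, and $p_1^{-1}(p_1(\zzf^d)) = p_1(\zzf^d)\oplus K_2 = \overline{\Gamma}$. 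Hence $\rrf^d/\overline{\Gamma} \cong K_1/p_1(\zzf^d)$. Since $p_1(\zzf^d)$ is a lattice of $K_1$ (item 3), choosing a basis of that lattice identifies this quotient with $\ttf^{d_1}$. The bound $d_1 \le d-1$ holds because $\chi_A(0) = \pm\det A$ with $r \ge 2$, so $\chi_A$ is not unimodular and $f_2 \ne 1$.

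\emph{Item 4.} The map $K_2 \to H_2$, $x \mapsto p_{\ttf^d}(x)$, is a surjective continuous morphism with kernel $K_2\cap\zzf^d$. To conclude $H_2 \cong \ttf^{d_2}$, we must show that $K_2\cap\zzf^d$ is a lattice of $K_2$. Discreteness is clear. For co-compactness we use rationality: $p_2$ has a rational matrix by item 1 of Proposition~\ref{closure of Gamma}, so $K_2 = p_2(\rrf^d)$ is spanned by the rational vectors $p_2(e_j)$. Clearing denominators, $K_2$ has a basis of integer vectors, so $K_2\cap\zzf^d$ has rank $d_2$. The continuous bijection $K_2/(K_2\cap\zzf^d) \to H_2$ goes from a compact space to a Hausdorff space, hence is a homeomorphism.

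\emph{Items 2 and 3.} Under the isomorphism of item 1, $T_{\ttf^d/H_2}$ corresponds to the map induced by $u_1$ on $K_1/p_1(\zzf^d)$, since $p_1$ commutes with $u$. By item 2 of Proposition~\ref{closure of Gamma}, $\det u_1 = \pm1$. Moreover $u_1$ maps the lattice $p_1(\zzf^d)$ into itself, so in a lattice basis it is an integer matrix with determinant $\pm1$, hence in $GL_{d_1}(\zzf)$, and the induced map is an automorphism. For exactness, item 4 transports $T_{H_2}$ to the toral endomorphism induced by $u_2$ on $K_2/(K_2\cap\zzf^d)$, with integer matrix $A_2$ in a lattice basis and characteristic polynomial $f_2$. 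We then apply Proposition~\ref{criterion via the density of Gamma} to $A_2$: it suffices to show that $\Gamma_2 := \bigcup_n u_2^{-n}(K_2\cap\zzf^d)$ is dense in $K_2$. Here we use that $\overline{\Gamma} = p_1(\zzf^d)\oplus K_2$ contains $K_2$. We would like $\Gamma_2$ to play, inside $K_2$, the role that $\Gamma$ plays inside $\rrf^d$. Note, however, that $\Gamma\cap K_2$ may be larger than $\Gamma_2$. The cleanest route is to apply Proposition~\ref{closure of Gamma} to the matrix $A_2$ itself. Its characteristic polynomial $f_2$ has no unimodular factor, so the corresponding "$K_1$" is $\{0\}$ and the closure of $\Gamma_2$ is all of $K_2$. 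Proposition~\ref{criterion via the density of Gamma} then gives exactness of $T_{H_2}$, since a measure-preserving isomorphism preserves exactness. This last step is where care is needed: the main obstacle is verifying that the conjugations really intertwine the maps and the Haar measures. The intertwining follows from the commutation of $p_{\ttf^d}$ and $p_1$ with $u$; for the measures, the isomorphisms are continuous group isomorphisms of compact groups and therefore preserve Haar measure.
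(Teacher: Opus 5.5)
Your proof is correct. Items~1 and~4 follow the paper's argument essentially verbatim. Spanning $K_2$ by the rational vectors $p_2(e_j)$ is the dual form of the paper's remark that $K_2$ is cut out by integral linear equations. Your justification of $d_1\le d-1$ fills in something the paper leaves implicit.

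Items~2 and~3 take a different route. For item~2 the paper stays on the torus. It writes a coset in the kernel as $x_1+H_2$ with $x_1\in H_1$, and uses $f_1(A)x_1=0$ together with the constant term $\pm1$ of $f_1$. This expresses $x_1$ as an integral combination of the $A^kx_1$, $k\ge1$, all of which lie in $H_2$, giving injectivity; surjectivity is inherited from $T$. You instead transport $T_{\ttf^d/H_2}$ through the isomorphism of item~1 and observe that $u_1$ preserves the lattice $p_1(\zzf^d)$ with $\det u_1=\pm1$. Hence $u_1$ lies in $GL_{d_1}(\zzf)$ in a lattice basis. This is cleaner, exhibits the inverse explicitly, and avoids the decomposition $\ttf^d=H_1+H_2$.

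For item~3 the paper never puts coordinates on $H_2$. Since $\Ker T_{H_2}^n=\Ker T^n\subset H$, every tail-measurable function on $H_2$ is $H$-periodic. As $H$ is dense in $H_2=\overline{H}$, the translation-continuity argument of Proposition~\ref{criterion via the density of Gamma} runs directly inside $H_2$. Your route conjugates $T_{H_2}$ to $T_{A_2}$ on $\ttf^{d_2}$ and then applies the density results to $(K_2,K_2\cap\zzf^d,u_2)$. It is valid and more modular, but it forces you to prove item~4 first and re-derives from scratch a density the paper already has.

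Your warning that $\Gamma\cap K_2$ may be larger than $\Gamma_2$ is unfounded. Since $K_2$ is $A$-stable, $A^{-n}\zzf^d\cap K_2=u_2^{-n}(K_2\cap\zzf^d)$, so $\Gamma\cap K_2=\Gamma_2$. The density of $\Gamma_2$ in $K_2$ also follows in one line from item~3 of Proposition~\ref{closure of Gamma}. If $\Gamma\ni\gamma_j\to k\in K_2$, then $p_1(\gamma_j)\to 0$ inside the discrete group $p_1(\zzf^d)$. Hence $\gamma_j\in\Gamma\cap K_2$ for large $j$, which would have spared you the detour.
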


\begin{proof}
  Remind that $H_2 = p_{\ttf^d}(\overline{\Gamma})$ and that
  $\overline{\Gamma} \supset \zzf^d$. 
  The canonical endomorphism $x \mapsto p_{\ttf^d}(x)+H_2$ from
  $\rrf^d$ to $\ttf^d/H_2$ is surjective and
  its kernel is $\overline{\Gamma}$ since for all $x \in \rrf^d$, 
  \begin{eqnarray*}
    p_{\ttf^d}(x)+H_2 = H_2
    \iff p_{\ttf^d}(x) \in p_{\ttf^d}(\overline{\Gamma})
    \iff x \in \overline{\Gamma}+\zzf^d
    \iff x \in \overline{\Gamma}.
  \end{eqnarray*}
  Taking quotients yields a canonical isomorphism from
  $\ttf^d/H_2$ to $\rrf^d/\overline{\Gamma}$.
  The canonical isomorphism with $K_1/p_1(\zzf^d)$ follows from the
  equalities $\rrf^d = K_1 \oplus K_2$ and $\rrf^d = p_1(\zzf^d) \oplus K_2$
  and the inclusion $p_1(\zzf^d) \subset K_1$. And since $p_1(\zzf^d)$ is a
  lattice of $K_1$, the quotient $K_1/p_1(\zzf^d)$
  is isomorphic to $\ttf^{d_1}$. Item~1 follows.


  The endomorphism $T$ of $\ttf^d$ is surjective. Taking quotients,
  we derive that the endomorphism $T_{\ttf^d/H_2}$ of $\ttf^d/H_2$
  is surjective, so we have only to check that its kernel contains
  only $0_{\ttf^d/H_2} = H_2$. Let $C = x+H_2$ be a coset in
  $\Ker T_{\ttf^d/H_2}$. Since $\ttf^d = H_1+H_2$, one has $x = x_1+x_2$ 
  for some $(x_1,x_2) \in H_1 \times H_2$, so $C = x_1+H_2$.  
  Hence, we have
  $H_2 = 0_{\ttf^d/H_2} = T_{\ttf^d/H_2}(C) = Ax_1 + H_2$, i.e. $Ax_1 \in H_2$,
  so $A^kx_1 \in H_2$ for every $k \ge 1$.
  
  Since $H_1 = p_{\ttf^d}(K_1)$, one has $x_1 = p_{\ttf^d}(v_1)$ for some
  $v_1 \in K_1$. By definition of $K_1$, one has $f_1(A)v_1 = 0_{\rrf^d}$.  
  Quotienting by $\zzf^d$, we derive $f_1(A)x_1 = 0_{\ttf^d}$. Since 
  $f_1$ is unimodular, we get that $x_1$ is a linear combination with
  integral coefficients of $Ax_1,\ldots,A^dx_1$. Thus $x_1 \in H_2$
  and $C = H_2 = 0_{\ttf^d/H_2}$. Item~2 follows. 
  
  Since $T(H_2) = H_2$, the endomorphism $T$ induces a surjective
  endomorphism $T_{H_2}$ on $H_2$, which
  necessarily preserves the Haar measure $\eta_{H_2}$ on $H_2$
  (see Lemma~\ref{image of Haar measure by a surjective morphism}).
  Let $f : H_2 \to \rrf$ be a bounded $\tc_{2}$-measurable function, where
  $\tc_{2} := \bigcap_{n \ge 0}T_{H_2}^{-n}\bc(H_2)$.
  Then $f$ is $H$-periodic, therefore almost everywhere constant,
  by continuity of the map $h \mapsto f(\cdot-h)$ from $H_2$
  to $L^1(\eta_{H_2})$. Item~3 follows.


  
  Since $K_2$ can be defined by linear equations with integral coefficients,
  $K_2 \cap \zzf^d$ is a lattice of $K_2$.
  We derive the isomorphisms like in item~1. Item~4 follows.  
\end{proof}



We now apply Lemma~\ref{quotient by a closed normal subgroup}
given in the annex.
For every coset $C=a+H_2$, the probability measure $\delta_a* \eta_{H_2}$
(i.e. the image of $\eta_{H_2}$ by the translation $y \mapsto a+y$ in $\ttf^d$)
is carried by $C$ and depends only on $C$.
We call it {\it uniform distribution on} $C$ and denote it by $\eta_C$
(this notation is coherent when $C=H_2$).
Then the disintegration of $\eta$ by the canonical projection 
$p_{\ttf^d/H_2} : \ttf^d \to \ttf^d/H_2$ is given by

$$\eta = \int_{\ttf^d/H_2} \eta_C ~ \d\eta_{\ttf^d/H_2}(C).$$ 

These observations will help us to prove that
the endomorphism $T$ of the probability space $(\ttf^d,\eta)$ is
isomorphic to a skew product of $T_{\ttf^d/H_2}$ and $T_{H_2}$. 

\begin{prop}\label{skew product}
  Let $f : \ttf^d/H_2 \to \ttf^d$ be any Borel choice of
  representatives, i.e. any Borel map such that
  $p_{\ttf^d/H_2} \circ f = \id_{\ttf^d/H_2}$.
  For every $C \in \ttf^d/H_2$, set
  $$\alpha(C) := T(f(C)) - f\big(T_{\ttf^d/H_2}(C)\big).$$
  \begin{enumerate}
  \item This formula defines a Borel map $\alpha$ from $\ttf^d/H_2$ to $H_2$.
  \item The skew-product map $T_{\ttf^d/H_2} \times_\alpha T_{H_2} :
    \ttf^d/H_2 \times H_2 \to \ttf^d/H_2 \times H_2$ given by
    $$\big(T_{\ttf^d/H_2} \times_\alpha T_{H_2} \big) (C,y)
    := (T_{\ttf^d/H_2}(C),T_{H_2}(y)+\alpha(C))$$
    is measurable and preserves the measure
    $\eta_{\ttf^d/H_2} \otimes \eta_{H_2}$.  
  \item The map $\Phi : \ttf^d/H_2 \times H_2 \to \ttf^d$
    defined by $\Phi(C,y) := f(C)+y$ is an isomorphism transforming
    $(\ttf^d/H_2 \times H_2~,~\eta_{\ttf^d/H_2} \otimes \eta_{H_2}~,
    ~T_{\ttf^d/H_2} \times_\alpha T_{H_2})$ into $(\ttf^d,\eta,T)$. 
  \end{enumerate}
\end{prop}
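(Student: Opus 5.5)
We will prove the three items in order, relying on the fact that $p_{\ttf^d/H_2} : \ttf^d \to \ttf^d/H_2$ is a continuous group morphism that intertwines $T$ and $T_{\ttf^d/H_2}$, i.e. $p_{\ttf^d/H_2}\circ T = T_{\ttf^d/H_2}\circ p_{\ttf^d/H_2}$, and on the disintegration formula $\eta = \int \eta_C \,\d\eta_{\ttf^d/H_2}(C)$ stated just before the proposition.

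For item~1, $\alpha$ is Borel as a difference of compositions of Borel maps ($f$ Borel, $T$ and $T_{\ttf^d/H_2}$ continuous). To see that it takes values in $H_2$, we apply $p_{\ttf^d/H_2}$: using the intertwining relation and $p_{\ttf^d/H_2}\circ f=\id$, we get $p_{\ttf^d/H_2}(T(f(C))) = T_{\ttf^d/H_2}(C) = p_{\ttf^d/H_2}(f(T_{\ttf^d/H_2}(C)))$. Hence the difference lies in the kernel of $p_{\ttf^d/H_2}$, which is $H_2$.

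For item~3, we first check that $\Phi$ is a bijection. Its inverse is $x \mapsto (p_{\ttf^d/H_2}(x),\, x - f(p_{\ttf^d/H_2}(x)))$, which is Borel, and $x - f(p_{\ttf^d/H_2}(x))\in H_2$ by the same kernel argument. So $\Phi$ is a Borel bijection with Borel inverse. Next comes the conjugacy identity. We have $T(\Phi(C,y)) = T(f(C)) + T(y) = f(T_{\ttf^d/H_2}(C)) + \alpha(C) + T_{H_2}(y)$, and this equals $\Phi\big((T_{\ttf^d/H_2}\times_\alpha T_{H_2})(C,y)\big)$. It remains to show that $\Phi$ pushes $\eta_{\ttf^d/H_2}\otimes\eta_{H_2}$ to $\eta$. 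For fixed $C$, the image of $\eta_{H_2}$ under $y\mapsto f(C)+y$ is $\delta_{f(C)}*\eta_{H_2}=\eta_C$, since $f(C)\in C$. Integrating over $C$ with Fubini and then using the disintegration formula gives $\Phi(\eta_{\ttf^d/H_2}\otimes\eta_{H_2}) = \int \eta_C\,\d\eta_{\ttf^d/H_2}(C)=\eta$.

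Item~2 then follows from item~3. Measurability of the skew product is clear, since $\alpha$ is Borel and the other components are continuous. Moreover, the skew product equals $\Phi^{-1}\circ T\circ\Phi$, where $\Phi$ is measure-preserving with measurable inverse and $T$ preserves $\eta$; hence it preserves $\eta_{\ttf^d/H_2}\otimes\eta_{H_2}$. The only delicate point is the Borel measurability of $\Phi^{-1}$ and the disintegration step, but both are supplied directly by the Borel choice $f$ and by Lemma~\ref{quotient by a closed normal subgroup}. So we expect no real obstacle beyond careful bookkeeping.
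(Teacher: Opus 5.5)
Your proof is correct. Item 1, the bijectivity of $\Phi$ and the conjugacy identity are handled as in the paper. There are two genuine differences.

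First, the equality $\Phi(\eta_{\ttf^d/H_2}\otimes\eta_{H_2})=\eta$. The paper sets $\pi:=f_*(\eta_{\ttf^d/H_2})*\eta_{H_2}$ and notes $\pi*\eta_{H_2}=\pi$. It then disintegrates $\pi$ along $p_{\ttf^d/H_2}$ into unknown conditional laws $\pi_C$. Finally it invokes item 3 of Lemma~\ref{quotient by a closed normal subgroup} ($\pi_C*\eta_{H_2}=\eta_C$ for any probability carried by $C$). You instead identify the fibre measures directly as $\delta_{f(C)}*\eta_{H_2}=\eta_C$ and conclude by Fubini and Weil's formula. This is shorter, and it avoids the existence of a disintegration of $\pi$: it only needs Fubini for the Borel function $(C,y)\mapsto \mathbf{1}_B(f(C)+y)$. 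The paper's detour could in fact be collapsed into your computation.

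Second, item 2. The paper checks it directly by Fubini, which implicitly uses that $T_{H_2}$ and $T_{\ttf^d/H_2}$ preserve their Haar measures (Lemma~\ref{image of Haar measure by a surjective morphism}). You deduce it from item 3 by writing the skew product as $\Phi^{-1}\circ T\circ\Phi$, which is legitimate because the conjugacy holds everywhere and $\Phi$ is bimeasurable. Your route needs only that $T$ preserves $\eta$. It is not circular, since your proof of item 3 never uses item 2.
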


\begin{proof}
  For every coset $C \in \ttf^d/H_2$, the elements $T(f(C))$
  and $f\big(T_{\ttf^d/H_2}(C)\big)$ belong to the same coset
  $T_{\ttf^d/H_2}(C)$, so their difference $\alpha(C)$
  belongs to $H_2$. Item 1 follows.

  One checks item 2 by applying Fubini's theorem.

  The map $\Phi$ is measurable since $f$ is. Set
  $\pi := \Phi\big(\eta_{\ttf^d/H_2} \otimes \eta_{H_2}\big) =
  f_*(\eta_{\ttf^d/H_2})*\eta_{H_2}$.
  Since $\eta_{H_2} * \eta_{H_2} = \eta_{H_2}$, we have $\pi * \eta_{H_2} = \pi$. 
  For every $C \in \ttf^d/H_2$ and $y \in H_2$, we have
  $p_{\ttf^d/H_2} \big(\Phi(C,y)) = f(C) + y + H_2 = C$.
  Thus $p_{\ttf^d/H_2}(\pi) = \eta_{\ttf^d/H_2}$ and the
  disintegration of $\eta$ by $p_{\ttf^d/H_2}$ is given by
  $$\pi = \int_{\ttf^d/H_2} \pi_C ~ \d\eta_{\ttf^d/H_2}(C),$$
  where $\pi_C$ is some probability measure carried by $C$, depending
  measurably on $C$. Hence
  $$\pi = \pi*\eta_{H_2}
  = \int_{\ttf^d/H_2} (\pi_C*\eta_{H_2}) ~ \d\eta_{\ttf^d/H_2}(C)
  = \int_{\ttf^d/H_2} \eta_C ~ \d\eta_{\ttf^d/H_2}(C) = \eta,$$
  by Lemma~\ref{quotient by a closed normal subgroup} given in the annex.
  
  
  Let $x \in \ttf^d$. For every $C \in \ttf^d/H_2$ and $y \in H_2$,
  \begin{eqnarray*}
    \Phi(C,y) = x
    &\iff& \big( f(C)+y = x \text{ and } C = x+H_2 \big) \\
    &\iff& \big( C = x+H_2 \text{ and } y = x-f(x+H_2) \big).
  \end{eqnarray*}  
  Since $x-f(x+H_2) \in H_2$, we derive that $\Phi$ is bijective and
  $\Phi^{-1}$ is measurable.
  
  Last, for every $C \in \ttf^d/H_2$ and $y \in H_2$,
  \begin{eqnarray*}
  \Phi\big(T_{\ttf^d/H_2} \times_\alpha T_{H_2} \big)
  (C,y)
  &=& f\big(T_{\ttf^d/H_2}(C)\big)
      + T_{H_2}(y) + \alpha(C) \\
  &=& T(f(C)) + T_{H_2}(y) \\  
  &=& T(f(C)+y) = T(\Phi(C,y))
  \end{eqnarray*}
  Item 3 follows. 
\end{proof}

A similar statement is given by Krzy\.{z}ewski in Proposition 2
of \cite{Krzyzewski} in a more general context (namely for an automorphism
of an arbitrary second-countable compact Abelian group).
The proof given there is much more involved since it relies on Abramov's
Lemmas 5 and 6 in~\cite{Abramov} (which uses a general theorem of Rokhlin
result to get a measurable choice of representatives) and uses Fourier
analysis on compact Abelian groups to prove that the skew-product map is
a metric isomorphism of dynamical systems.

Our approach is much more basic and is constructive. Indeed, a Borel choice
of representatives $f : \ttf^d/H_2 \to \ttf^d$ can be obtained simply
as follows. 
Fix a fundamental domain $D_1$ of $K_1/p_1(\zzf^d)$, namely a Borel subset
of $K_1$ which contains exactly one representative of each equivalence class
of $K_1/p_1(\zzf^d)$. Then $K_1 = D_1 \oplus p_1(\zzf^d)$, where the sign $\oplus$  
in the Minkovski sum means that each element of $K_1$ can be uniquely written
as the sum of a vector of $D_1$ and a vector of $p_1(\zzf^d)$. Hence 
$$\rrf^d = (D_1 \oplus p_1(\zzf^d)) \oplus K_2 =
D_1 \oplus(p_1(\zzf^d) \oplus K_2) = D_1 \oplus \overline{\Gamma}.$$
The next lemma provides a Borel map $f : \ttf^d/H_2 \to \ttf^d$
such that $p_{\ttf^d/H_2} \circ f = \id_{\ttf^d/H_2}$.

\begin{lemm}\label{Borel choice of representative}
  The sum $\ttf^d = p_{\ttf^d}(D_1) + p_{\ttf^d}(\overline{\Gamma})
  = p_{\ttf^d}(D_1) + H_2$ is direct. Thus every coset
  $C \in \ttf^d/H_2$ has a unique representative $f(C)$ in
  $p_{\ttf^d}(D_1)$ and the map  $f : \ttf^d/H_2 \to \ttf^d$
  thus defined is Borel.
\end{lemm}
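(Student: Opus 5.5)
The plan is to lift the decomposition $\rrf^d = D_1 \oplus \overline{\Gamma}$ to the torus through $p_{\ttf^d}$, and then read off measurability of $f$ from an explicit formula.

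\emph{Step 1: the sum is total.} Take $x \in \ttf^d$ and a lift $\tilde x \in \rrf^d$. Write $\tilde x = a + g$ with $a \in D_1$ and $g \in \overline{\Gamma}$. Then $x = p_{\ttf^d}(a) + p_{\ttf^d}(g)$, and $p_{\ttf^d}(g) \in p_{\ttf^d}(\overline{\Gamma}) = H_2$ by item 4 of Proposition~\ref{closure of Gamma}.

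\emph{Step 2: the sum is direct.} Suppose $p_{\ttf^d}(a) + h = p_{\ttf^d}(a') + h'$ with $a, a' \in D_1$ and $h, h' \in H_2$. Lift $h - h'$ to some $g \in \overline{\Gamma}$. We get $a - a' + g \in \zzf^d \subset \overline{\Gamma}$, so $a - a' \in \overline{\Gamma}$. Applying $p_1$ and using $p_1(\overline{\Gamma}) = p_1(\zzf^d)$ (item 3), together with $p_1(a) = a$ and $p_1(a') = a'$, gives $a - a' \in p_1(\zzf^d)$. Since $D_1$ contains exactly one representative of each class of $K_1/p_1(\zzf^d)$, this forces $a = a'$, and hence $h = h'$. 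The key point is that $\zzf^d \subset \overline{\Gamma}$, so passing to the torus creates no new coincidences.

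\emph{Step 3: $f$ is well defined and Borel.} Each coset $C$ then meets $p_{\ttf^d}(D_1)$ in exactly one point, which we call $f(C)$. For Borel measurability, composing with the quotient map $q : \ttf^d \to \ttf^d/H_2$ reduces the question to the map $x \mapsto f(q(x))$. This reduction is valid because $\ttf^d/H_2$ carries the quotient topology and $q$ admits local continuous sections; more concretely, one can use the identification $\ttf^d/H_2 \cong K_1/p_1(\zzf^d)$ from Proposition~\ref{more on Gamma and H}.

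The map is explicit: for a lift $\tilde x$, the quantity $p_1(\tilde x) \in K_1$ is well defined modulo $p_1(\zzf^d)$, and $f(q(x)) = p_{\ttf^d}(\delta(p_1(\tilde x)))$. Here $\delta : K_1 \to D_1$ sends a point to its representative in $D_1$. In coordinates of a basis of the lattice $p_1(\zzf^d)$, the map $\delta$ is Borel whenever $D_1$ is Borel. Indeed, for each $\lambda \in p_1(\zzf^d)$ the set $\{y : y - \lambda \in D_1\} = D_1 + \lambda$ is Borel, and $\delta(y) = y - \lambda$ on it, so $\delta$ is a countable piecewise-continuous map. Finally, $p_1$ is linear and a Borel lift $\ttf^d \to [0,1)^d$ exists.

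\emph{Main obstacle.} The delicate point is the measurability statement: one must say precisely which Borel structure is used on $\ttf^d/H_2$. I would handle it via the canonical topological isomorphism $\ttf^d/H_2 \cong K_1/p_1(\zzf^d) \cong \ttf^{d_1}$. Through this isomorphism, $f$ becomes the composition of the countable piecewise-continuous map $\delta$ with $p_{\ttf^d}$, which is Borel on the torus $K_1/p_1(\zzf^d)$.
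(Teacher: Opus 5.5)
Your proof is correct and follows essentially the paper's route. You lift to $\rrf^d$, use $\zzf^d \subset \overline{\Gamma}$ so that the decomposition $\rrf^d = D_1 \oplus \overline{\Gamma}$ descends to the torus, and identify $f$ through the projection onto $D_1$ along $\overline{\Gamma}$; your $\delta \circ p_1$ is exactly the paper's $p_{D_1}$, which is then factored through the quotient maps. Your measurability argument, a countable piecewise translation for $\delta$ plus local continuous sections of $\ttf^d \to \ttf^d/H_2$ to pass to the quotient, is more explicit than the paper's one-line ``composition of Borel maps''.
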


\begin{proof}
  The equality $\ttf^d = p_{\ttf^d}(D_1) + p_{\ttf^d}(\overline{\Gamma})
  = p_{\ttf^d}(D_1) + H_2$ follows from the equality
  $\rrf^d = D_1 \oplus \overline{\Gamma}$. Therefore, each 
  coset in $\ttf^d/H_2$ has at least one representative in $D_1$.
  
  Call $p_{D_1}$ the projection from $\rrf^d$ onto $D_1$ along
  $\overline{\Gamma}$. Since $\overline{\Gamma} \supset \zzf^d$,
  the map $p_{D_1}$ is $\zzf^d$-periodic, so there exists a unique
  map $\tilde{p}_{D_1} : \ttf^d \to D_1$ such that
  $p_{D_1} = \tilde{p}_{D_1} \circ p_{\ttf^d}$.
  Since $p_{\ttf^d}(\overline{\Gamma}) \supset H_2$, the map 
  $\tilde{p}_{D_1}$ is $H_2$-periodic, so there exists a unique
  map $\widehat{p}_{D_1} : \ttf^d/H_2 \to D_1$ such that
  $\tilde{p}_{D_1} = \widehat{p}_{D_1} \circ p_{\ttf^d/H_2}$.

  Let $C = p_{\ttf^d/H_2}(x) \in \ttf^d/H_2$.
  Then $x = p_{\ttf^d}(a)$ for some $a \in \rrf^d$. 
  Assume that $y$ is a representative of $C$ in $p_{\ttf^d}(D_1)$.
  Then $y = p_{\ttf^d}(b)$ for some $b \in D_1$ and $x-y \in H_2$. 
  Since $p_{\ttf^d}(a-b) \in H_2 = p_{\ttf^d}(\overline{\Gamma})$,
  we derive $a-b \in \overline{\Gamma} + \zzf^d = \overline{\Gamma}$,
  so $b = p_{D_1}(a) = \tilde{p}_{D_1}(x) = \widehat{p}_{D_1}(C)$ and
  $y = p_{\ttf^d}(\widehat{p}_{D_1}(C))$. 

  As a result, the only representative of $C$ in $D_1$ is
  $f(C) = p_{\ttf^d}(\widehat{p}_{D_1}(C))$. The map
  $f : \ttf^d/H_2 \to \ttf^d$ thus defined is Borel
  as a composition of Borel maps.
\end{proof}

\subsection{Metric isomorphisms from a skew product
  to a direct product}

We have seen that the measure preserving map $T$ is (metrically)
isomorphic to a skew product
$T_{\ttf^d/H_2} \times_\alpha T_{H_2}$, where $\alpha$
is a suitable Borel map from $\ttf^d/H_2$ to $H_2$.
Actually, a general theorem of Lind~\cite{Lind} ensures that this skew product
`algebraically splits'. We now state and prove a variant of Lind's theorem
which is sufficient for our purpose. 

\begin{theo}\label{splitting theorem}
  Let $V$ be a real vector space with finite dimension, $\Lambda$ be a lattice
  of $V$. Fix $s \in GL(V)$, and call $S$ the surjective endomorphism of the
  torus $V/\Lambda$ obtained from $s$ by quotienting.
  Assume that $s$ has no unimodular eigenvalue. 
  
  Given any automorphism $U$ of a probability space $(X,\xc,\mu)$
  and any measurable map $\alpha : X \to V/\Lambda$,
  one can construct explicitly a measurable map $\beta : X \to V/\Lambda$ 
  such that the map $\Psi_\beta : X \times V/\Lambda \to X \times V/\Lambda$
  defined by $\Psi_\beta(x,y) := (x,y+\beta(x))$
   is an isomorphism transforming the skew product
  $U \times_\alpha S$ into the direct product $U \times S$. 
\end{theo}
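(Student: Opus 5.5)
We look for $\beta$ by solving a cohomological (twisted coboundary) equation, lifting everything to $V$ and using the hyperbolic splitting of $s$. Since $s$ has no eigenvalue of modulus $1$, we have $V = E_s \oplus E_u$, where $E_s$ (resp. $E_u$) is the sum of the real generalized eigenspaces of $s$ associated to eigenvalues of modulus $<1$ (resp. $>1$). Both subspaces are $s$-stable, $s$ is invertible on each of them, and we can fix a norm on $V$ and constants $c>0$, $\lambda \in {]0,1[}$ with $|s^n v| \le c\lambda^n|v|$ for $v \in E_s$ and $|s^{-n}v| \le c\lambda^n|v|$ for $v \in E_u$, $n \ge 0$. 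Call $q_s,q_u$ the associated projections.

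First, I compute what is needed. One has $\Psi_\beta\big((U \times_\alpha S)(x,y)\big) = (Ux, Sy+\alpha(x)+\beta(Ux))$ and $(U\times S)\big(\Psi_\beta(x,y)\big) = (Ux, Sy+S\beta(x))$. So $\Psi_\beta \circ (U\times_\alpha S) = (U\times S)\circ\Psi_\beta$ if and only if
$$\beta(Ux) = S\beta(x) - \alpha(x) \quad \text{for (almost) every } x \in X.$$
To solve this, fix a bounded Borel fundamental domain $D$ of $V/\Lambda$ (a half-open parallelepiped built on a basis of $\Lambda$). Let $a : X \to D$ be the Borel lift of $\alpha$, so that $\alpha = a \bmod \Lambda$ and $|a| \le M$. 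It suffices to find a measurable $b : X \to V$ with $b(Ux) = s\,b(x) - a(x)$ in $V$, and then to set $\beta := b \bmod \Lambda$.

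Second, I solve this equation componentwise, which is where the invertibility of $U$ is used. On $E_s$, I set
$$b_s(x) := -\sum_{n\ge1} s^{n-1} q_s\big(a(U^{-n}x)\big).$$
On $E_u$, I set
$$b_u(x) := \sum_{n\ge0} s^{-n-1} q_u\big(a(U^{n}x)\big).$$
Both series converge normally, since their terms are bounded by $c\lambda^{n-1}\|q_s\|M$ and $c\lambda^{n+1}\|q_u\|M$ respectively. So $b := b_s+b_u$ is Borel, as a uniform limit of Borel maps. Reindexing the series gives $b_s(Ux) = -q_s a(x) + s\,b_s(x)$ and $s\,b_u(x) - q_u a(x) = b_u(Ux)$. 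Adding, and using $q_s+q_u = \id$ and the fact that $s$ commutes with $q_s$ and $q_u$, yields $b(Ux) = s\,b(x)-a(x)$ everywhere. Reducing modulo $\Lambda$, and using $S(v \bmod \Lambda) = sv \bmod \Lambda$, gives the required identity for $\beta$.

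Finally, $\Psi_\beta$ is a measurable bijection with measurable inverse $\Psi_{-\beta}$. By Fubini's theorem and the translation invariance of the Haar measure on $V/\Lambda$, it preserves $\mu \otimes \eta_{V/\Lambda}$. Together with the conjugacy identity, this shows that $\Psi_\beta$ is the desired isomorphism.

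The only delicate point is the convergence and measurability of $b$. This is exactly why I take the lift $a$ bounded (via a bounded fundamental domain) and split along $E_s \oplus E_u$: summing into the past on the contracting part and into the future on the expanding part makes both series geometric. Note that the projections $q_s,q_u$ need not be rational, and this does not matter, since all computations take place in $V$ before reducing modulo $\Lambda$. Everything is explicit once $D$ is fixed.
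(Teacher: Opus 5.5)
Your proposal is correct and is essentially the paper's proof: the same bounded lift $a$ taken from a bounded fundamental domain, the same stable/unstable splitting of $V$, and the same two series. After the reindexing $n \mapsto n+1$ in your stable sum, your $b$ coincides with the paper's $b$, and it solves the cohomological equation everywhere on $X$. The only cosmetic difference is that you use constants $c,\lambda$ instead of an adapted norm with $\Vert s_{\rm st}\Vert<1$ and $\Vert s_{\rm un}^{-1}\Vert<1$.
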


Let us explain the main differences with Lind's splitting theorem.
\begin{itemize}
\item The first factor can be any automorphism
  (ergodic or not) of any probability space whereas
  Linds takes an ergodic automorphism of a Lebesgue space.  
\item In Lind's theorem, the second factor can be any continuous
  \emph{automorphism} of any Abelian compact group.  
  Actually, Lind's theorem still applies to a continuous surjective
  endomorphism $S$, since the conclusion for the natural extension of $S$
  easily yields the conclusion for $S$. Hence Lind's theorem applies to
  a broader group of second factors than our theorem. 
\item The proof of Lind's theorem is long, involved, and non-constructive,
  whereas the proof in our context is much simpler and constructive. 
\end{itemize}

We now prove Theorem~\ref{splitting theorem}.

\begin{proof}
  For any  Borel map $\beta : X \to V/\Lambda$, the map $\Psi_\beta$
  is bimeasurable and preserves the measure $\mu \otimes \eta_{V/\Lambda}$.
  So the only thing to check is the equality
  $\Psi_\beta \circ (U \times_\alpha S) = (U \times S) \circ \Psi_\beta$
  $\mu \otimes \eta_{V/\Lambda}$-almost everywhere, which is equivalent to
  \begin{equation}\label{condition on beta}
    \alpha(x) + \beta(U(x)) = S(\beta(x))
    \text{ for } \mu \text{-almost every } x \in X.
  \end{equation}
  To construct an explicit solution of the functional
  equation~(\ref{condition on beta}) which works for all $x \in X$,
  we fix a bounded fundamental domain $D$ of $V/\Lambda$.
  For every $x \in X$, call $a(x)$ the only representative in $D$
  of $\alpha(x)$.
  The map $a$ thus defined is Borel and we have $\alpha = p_{V/\Lambda} \circ a$.
  
  We split $s$ into a stable part and an unstable part.
  By assumption, $\chi_s$ has no unimodular root, so it can be factored 
  as $\chi_s = f_{\rm st}f_{\rm un}$ in $\rrf[X]$, where the roots of $f_{\rm st}$
  (respectively $f_{\rm un}$) have modulus $<1$ (respectively $>1$).
  Therefore $V = K_{\rm st} \oplus K_{\rm un}$, where $K_i = \Ker f_i(s)$ for
  $i \in \{\mathrm{st},\mathrm{un}\}$. Call $p_i$ the associated
  projection from $V$ to $K_i$ and $s_i$ the endomorphism induced by
  $s$ onto $K_i$. One can endow $V$ with a norm $|\cdot|$ such that the
  associated operator norms of $s_{\rm st}$ and $s_{\rm un}^{-1}$ satisfy
  $\Vert s_{\rm st} \Vert<1$ and $\Vert s_{\rm un}^{-1} \Vert<1$.
  Let $M := \sup\{|x| : x \in p_{\rm st}(D) \cup p_{\rm un}(D)\}$.

  Then for every $x \in X$ and $n \ge 0$,
  $$\big|s^{n}(p_{\rm st}(a(U^{-n-1}(x)))\big|
  \le \Vert s_{\rm st} \Vert^{n} \times M \text{ and }
  \big|s^{-n-1}(p_{\rm un}(a(U^{n}(x))))\big|
  \le \Vert s_{\rm un}^{-1} \Vert^{n+1} \times M,$$
  so the vector 
  $$b(x) := -\sum_{n \ge 0} s^np_{\rm st}(a(U^{-n-1}(x)))
  + \sum_{n \ge 0} s^{-n-1}p_{\rm un}(a(U^n(x))) \in V$$
  is well defined. Moreover,
  \begin{eqnarray*}
  a(x) + b(U(x))
    &=& p_{\rm st}(a(x)) - \sum_{n \ge 0} s^n(p_{\rm st}(a(U^{-n}(x)))) \\
    & & + p_{\rm un}(a(x)) + \sum_{n \ge 0} s^{-n-1}(p_{\rm un}(a(U^{n+1}(x)))) \\
    &=& - \sum_{n \ge 0} s^{n+1}(p_{\rm st}(a(U^{-n-1}(x))))
        + \sum_{n \ge 0} s^{-n}(p_{\rm un}(a(U^{n}(x)))) \\
    &=& s(b(x)).
  \end{eqnarray*}
  As a result, the map $\beta := p_{V/\Lambda} \circ b$ is a solution of
  equation~\ref{condition on beta} everywhere on $X$.
  The proof is complete.
\end{proof}

It remains to apply Theorem~\ref{splitting theorem} to
$X = \ttf^d/H_2$, $U = T_{\ttf^d/H_2}$, $V = K_2$, $s=u_2$, 
$\Lambda = K_2 \cap \zzf^d$, so $V/\Lambda = H_2$ and $S = T_{H_2}$.
This is possible because the only unimodular factor of $\chi_{u_2} = f_2$
is $1$, so $u_2$ has non eigenvalue of modulus $1$, thanks to the next lemma.

\begin{lemm}\label{minimal polynomial of an unimodular algebraic integer}
  Let $\zeta \in \ccf$ be an algebraic integer (i.e. a root of some monic
  polynomial with integral coefficients). If $|\zeta|=1$ and
  $\zeta \notin \{-1,1\}$, then the minimal polynomial of $\zeta$ is
  self-reciprocal, hence unimodular (and its degree is even).    
\end{lemm}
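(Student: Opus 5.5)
Let $P$ be the minimal polynomial of $\zeta$ over $\qqf$. It is monic with integral coefficients, since $\zeta$ is an algebraic integer and Gauss's lemma applies. Let $n := \deg P$. The plan is to show that $\overline{\zeta}$ and $1/\zeta$ are roots of $P$, and then to deduce self-reciprocity from the irreducibility of $P$.

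First, $P$ has real coefficients, so $\overline{\zeta}$ is also a root of $P$. Since $|\zeta| = 1$, we have $\overline{\zeta} = 1/\zeta$. Hence $1/\zeta$ is a root of $P$.

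Next, consider the reciprocal polynomial $P^*(X) := X^n P(1/X) \in \zzf[X]$. Note that $P(0) \neq 0$: otherwise irreducibility would force $P = X$, which is impossible because $\zeta \neq 0$. Thus $P^*$ has degree exactly $n$, and it is irreducible over $\qqf$, since any factorization of $P^*$ would give a factorization of $P$ by the same reversal. Moreover $P^*(\zeta) = \zeta^n P(1/\zeta) = 0$. Therefore $P^*$ is a nonzero rational multiple of the minimal polynomial: $P^* = c P$ with $c \in \qqf^*$.

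We now pin down $c$. Comparing leading coefficients gives $P(0) = c$. Comparing constant terms gives $1 = c P(0)$. Hence $c^2 = 1$, so $c = \pm 1$ and $P(0) = c = \pm 1$. This is unimodularity, and $P^* = \pm P$ is exactly self-reciprocity in the broad sense.

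The main obstacle is ruling out the case $c = -1$ and proving that the degree is even. If $P^* = -P$, then $P(1) = P^*(1) = -P(1)$, so $P(1) = 0$. Irreducibility then forces $P = X - 1$, contradicting $\zeta \neq 1$. Hence $c = 1$ and $P^* = P$.

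For the parity, note that $\zeta$ is not real, since the only real numbers of modulus $1$ are $\pm 1$. So the roots of $P$ that are not real come in conjugate pairs, and we must exclude real roots. The only possible real roots of modulus $1$ are $\pm 1$, and these cannot be roots of the irreducible $P$ because $P \neq X \mp 1$. A real root $\lambda$ with $|\lambda| \neq 1$ is not excluded by this argument, but we do not need to exclude it. Instead we use the reciprocity: the involution $x \mapsto 1/x$ acts on the roots of $P$, which are distinct by separability. Its fixed points would be $\pm 1$, which are not roots. So the roots split into pairs $\{x, 1/x\}$, and $n$ is even.
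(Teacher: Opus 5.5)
Your proof is correct and follows essentially the paper's route: you show $\zeta$ is a root of the reversed polynomial, get $P^* = cP$ with $c^2=1$ by comparing leading and constant coefficients, and exclude $c=-1$ because it would force $P(1)=0$. The only variation is the parity step: you pair the distinct roots under $x \mapsto 1/x$, whereas the paper notes that $P^*=P$ with $n$ odd gives $P(-1) = (-1)^n P(-1) = -P(-1)$, hence $P(-1)=0$. Both work, and the paper's version is slightly more direct because it avoids separability.
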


\begin{proof}
  Call $f$ the minimal polynomial of $\zeta$, and $d$ its degree.
  Then $d \ge 2$.  
  Since $f$ is irreducible, its constant term is not $0$, so the degree
  of its reciprocal polynomial $g := X^df(1/X)$ is also $d$.
  Assume that $|\zeta|=1$. Then $\overline{\zeta} = \zeta^{-1}$ is a root
  of $g$, and $\zeta$ is also a root of $g$, since $g$ have real coefficients. 
  Hence $f$ divides $g$, and since their degrees are the same, $g=cf$
  for some $c \in \qqf^*$. Looking at the constant terms, we derive
  $1=c^2$. The equality $g=-f$ would imply $f(1)=0$, which contradicts
  the irreducibility of $f$. Hence $g=f$. (Moreover, the degree of $f$
  cannot be odd since it would imply $f(-1)=0$, which contradicts
  the irreducibility of $f$.) 
\end{proof}

Combining Proposition~\ref{more on Gamma and H},
Proposition~\ref{skew product} and Theorem~\ref{splitting theorem}
applied to $X = \ttf^d/H_2$ (endowed with its Borel $\sigma$-field),
$U = T_{\ttf^d/H_2}$, $V = K_2$, $s=u_2$, 
$\Lambda = K_2 \cap \zzf^d$ and $S = T_{H_2}$, we get 
a constructive simpler alternative proof of the next theorem.

\begin{theo} {\bf Krzy\.{z}ewski~\cite{Krzyzewski}}
  Given a Borel map $\alpha$ from $\ttf^d/H_2$ to $H_2$, there exists
  a Borel map $\beta$ from $\ttf^d/H_2$ to $H_2$ such that the map
  $\Psi_\beta : \ttf^d/H_2 \times H_2
  \to \ttf^d/H_2 \times H_2$ defined by $\Psi_\beta(C,y) := (C,y+\beta(C))$
  is an isomorphism transforming the skew product
  $T_{\ttf^d/H_2} \times_\alpha T_{H_2}$
  into the direct product $T_{\ttf^d/H_2} \times T_{H_2}$. 
\end{theo}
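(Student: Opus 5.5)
The plan is to obtain the statement as a direct instance of Theorem~\ref{splitting theorem}. We take $X = \ttf^d/H_2$ with its Borel $\sigma$-field and the Haar measure $\mu = \eta_{\ttf^d/H_2}$, and $U = T_{\ttf^d/H_2}$, which is an automorphism by item~2 of Proposition~\ref{more on Gamma and H}. On the torus side we take $V = K_2$, $\Lambda = K_2 \cap \zzf^d$ and $s = u_2$, the restriction of $x \mapsto Ax$ to $K_2$.

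First I check the hypotheses on the torus side. $K_2 \cap \zzf^d$ is a lattice of $K_2$ because $K_2$ is cut out by linear equations with rational (hence, after clearing denominators, integral) coefficients. By item~4 of Proposition~\ref{more on Gamma and H}, $K_2/\Lambda$ is canonically identified with $H_2$. Under this identification, the map $S$ obtained from $s$ by quotienting is exactly $T_{H_2}$: for $v \in K_2$ one has $p_{\ttf^d}(Av) = T(p_{\ttf^d}(v))$, and $A(K_2 \cap \zzf^d) \subset K_2 \cap \zzf^d$. Next, $s \in GL(K_2)$, since its characteristic polynomial is $f_2$ by item~2 of Proposition~\ref{closure of Gamma}, and $f_2(0) \neq 0$ because the irreducible factors of $\chi_A$ dividing $f_2$ are non-unimodular and none equals $X$ (as $\det A \neq 0$).

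The main point is that $s$ has no eigenvalue of modulus $1$. Suppose $\zeta$ were an eigenvalue of $u_2$ with $|\zeta| = 1$. Then $\zeta$ is a root of $f_2$, hence an algebraic integer, and its minimal polynomial is an irreducible factor of $f_2$.
\begin{itemize}
\item If $\zeta = \pm 1$, the minimal polynomial $X \mp 1$ is unimodular.
\item Otherwise, Lemma~\ref{minimal polynomial of an unimodular algebraic integer} shows the minimal polynomial is unimodular.
\end{itemize}
In either case this contradicts the definition of $f_2$ as the product of the non-unimodular irreducible factors. So Theorem~\ref{splitting theorem} applies to the given Borel $\alpha : \ttf^d/H_2 \to H_2 = V/\Lambda$ and produces a measurable $\beta$. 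The map $\Psi_\beta(C,y) = (C, y + \beta(C))$ then conjugates $T_{\ttf^d/H_2} \times_\alpha T_{H_2}$ to $T_{\ttf^d/H_2} \times T_{H_2}$.

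The only obstacle worth watching is bookkeeping rather than a real difficulty. We must make sure $\beta$ is Borel and not merely $\mu$-measurable. This holds because in the proof of Theorem~\ref{splitting theorem}, $b$ is an everywhere convergent series of Borel maps $x \mapsto s^n p_{\rm st}(a(U^{-n-1}x))$ and their unstable counterparts. Here $a$ is Borel, and $U^{\pm 1}$ is Borel since $U$ is a continuous group automorphism of a compact group. Hence $\beta = p_{V/\Lambda} \circ b$ is Borel. Finally, composing with the isomorphism $\Phi$ of Proposition~\ref{skew product} yields $T \simeq T_{\ttf^d/H_2} \times T_{H_2}$, the asserted splitting.
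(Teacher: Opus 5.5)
Your proposal is correct and is essentially the paper's own argument: apply Theorem~\ref{splitting theorem} with $X = \ttf^d/H_2$, $U = T_{\ttf^d/H_2}$, $V = K_2$, $\Lambda = K_2 \cap \zzf^d$ and $s = u_2$, after using Lemma~\ref{minimal polynomial of an unimodular algebraic integer} to rule out eigenvalues of modulus $1$. You also treat $\zeta = \pm 1$ separately (that lemma excludes this case) and check that $u_2$ is invertible and that $\beta$ is Borel, which fills in small details the paper leaves implicit.
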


\subsection{An example}

Call $(e_1,e_2,e_3)$ the canonical basis of $\rrf^3$. Let
$$A := \left(\begin{array}{ccc}
              1 & 1 & 0 \\
              2 & 3 & 1 \\
              0 & 0 & 3
            \end{array} \right).$$
Then $f_1 = X^2-4X+1$ and $f_2 = X-3$.
The subspace $K_1$ is the plane with equation $x_3=0$ whereas
the subspace $K_2$ is the line $\rrf v$ where $v := e_1+2e_2-2e_3$.
Therefore, the projection onto $K_1$ along $K_2$ is given by
$$p_1(x) = (x_1 + x_3/2 ~~~ x_2 + x_3 ~~~ 0)^\top \text{ for every }
x = (x_1 ~~~ x_2 ~~~ x_3)^\top \in \rrf^3.$$
Since $p_1(\zzf^3) = ((1/2)\zzf) \times \zzf \times \{0\}$,
the set $D_1 := [0,1/2[~\times [0,1[~\times \{0\}$ is a fundamental
domain of $K_1/p_1(\zzf^3)$.

For every $x \in \rrf^3$, the only representative of the coset
$C = p_{\ttf^3}(x) + H_2$ in $p_{\ttf^3}(D_1)$ is
$$f(C) = p_{\ttf^3}(\left(\begin{array}{c}
              (2x_1+x_3 - \lfloor 2x_1+x_3 \rfloor)/2 \\
              x_2+x_3  - \lfloor x_2+x_3 \rfloor \\
              0 
                               \end{array} \right))
        = p_{\ttf^3}(\left(\begin{array}{c}
              (2x_1+x_3 - \lfloor 2x_1+x_3 \rfloor)/2 \\
              x_2+x_3 \\
              0 
                           \end{array} \right)).$$
A computation yields                         
$$\alpha(C) = T(f(C)) - f(T_{\ttf^3/H_2}(C)) = p_{\ttf^3}(g(x)e_1),$$
where
$$g(x)
= \big(\lfloor 2x_1+2x_2+3x_3 \rfloor - \lfloor 2x_1+x_3 \rfloor \big)/2.$$
Since $2g(x) \in \zzf$, we have also $\alpha(C) = p_{\ttf^3}\big(g(x)v\big)$.

We follow step by step the proof of Theorem~\ref{splitting theorem}.
The set $D_2 := [0,1[v$ is a bounded fundamental domain of
$K_2/(K_2 \cap \zzf^d) = (\rrf v)/(\zzf v)$ and the only representative 
of $\alpha(C)$ in $D_2$ is $a(C) = \{ g(x) \} v$, where $\{\cdot\}$
denotes the fractional part. 
Since the endomorphisms $x \mapsto Ax$ and $x \mapsto 3x$ coincide on
$K_2$, the corresponding function $b$ is given by 
$$b(C) := \sum_{n \ge 0} \frac{a(T_{\ttf^3/H_2}^n(C))}{3^{n+1}} = h(x)v,
\text{ where }
h(x) := \sum_{n \ge 0} \frac{\big\{g(T^n(x))\big\}}{3^{n+1}}.$$
\subsection{Algebraic endomorphisms of dynamical systems}

In this subsection, we focus on endomorphisms of dynamical systems
which are also epimorphisms, i.e. surjective (continuous) endomorphisms
of compact groups.
Keep the notations of Proposition~\ref{closure of Gamma}.
For $i \in \{1,2\}$, set $H_i := p_{\ttf^d}(K_i)$, so $H_2 = \overline{H}$. 
The epimorphism $T$ of the compact group $\ttf^d$ induces
an epimorphism $T_{H_i}$ of the compact subgroup $H_i$,
which therefore preserves its Haar measure $\eta_{H_i}$. 
Since $K_1 \oplus K_2 = \rrf^d$, we have $H_1 + H_2 = \ttf^d$.
Yet, this last sum is not necessarily direct.  

\begin{prop}\label{finite-to-one factor maps}
  Call $P_1$, $P_2$ the matrices in the canonical basis of the projections
  $p_1$, $p_2$. Fix an integer $q \ge 1$ such that $qP_1$ and $qP_2$ have
  integral entries. Then 
  \begin{enumerate}
  \item The set $L := (K_1 \cap \zzf^d) \oplus (K_2 \cap \zzf^d)
    = \{x \in \rrf^d : (P_1x,P_2x) \in \zzf^d \times \zzf^d\}$ is a subgroup
    of $\zzf^d$ and contains $q\zzf^d$, so $[\zzf^d :  L] = m$ and
    $[L : q\zzf^d] = q^d/m$ for some divisor $m$ of $q^d$. 
    \item The endomorphism $\Sigma : (x_1,x_2) \mapsto x_1+x_2$ from
      $H_1 \times H_2$ to $\ttf^d$ is a $m$-to one factor map transforming
      the dynamical system
      $(H_1 \times H_2, T_{H_1} \times T_{H_2}, \eta_{H_1} \times \eta_{H_2})$
      into the dynamical system $(\ttf^d,\eta)$. 
    \item The endomorphism $\Theta : x \mapsto ((qP_1)x,(qP_2)x)$
      from $\ttf^d$ to $H_1 \times H_2$ is a $q^d/m$-to one factor map
      transforming
      the dynamical system $(\ttf^d,\eta)$ into the dynamical system 
      $(H_1 \times H_2, T_{H_1} \times T_{H_2}, \eta_{H_1} \times \eta_{H_2})$.  
  \end{enumerate}
\end{prop}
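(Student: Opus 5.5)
Item 1 is elementary lattice bookkeeping. Since $p_1+p_2=\id$ and $p_i$ is the identity on $K_i$, a vector $x$ lies in $(K_1\cap\zzf^d)\oplus(K_2\cap\zzf^d)$ exactly when $P_1x\in\zzf^d$ and $P_2x\in\zzf^d$: write $x=P_1x+P_2x$, and note $P_ix\in K_i$. Such an $x$ is then automatically in $\zzf^d$, so $L\subset\zzf^d$. For $x\in q\zzf^d$ we have $P_ix=(qP_i)(x/q)\in\zzf^d$, so $L\supset q\zzf^d$. Setting $m=[\zzf^d:L]$, the chain $q\zzf^d\subset L\subset\zzf^d$ together with $[\zzf^d:q\zzf^d]=q^d$ gives $m\mid q^d$ and $[L:q\zzf^d]=q^d/m$.

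For item 2, I will first identify $H_i$ with $K_i/(K_i\cap\zzf^d)$. Indeed, $p_{\ttf^d}$ restricted to $K_i$ has kernel $K_i\cap\zzf^d$, and this is a lattice of $K_i$ because $K_i$ is defined by rational equations, as in item 4 of Proposition~\ref{more on Gamma and H}. Hence $H_1\times H_2\simeq(K_1\oplus K_2)/L=\rrf^d/L$. Under this identification, $\Sigma$ becomes the canonical projection $\rrf^d/L\to\rrf^d/\zzf^d$. It is surjective because $H_1+H_2=\ttf^d$, it is a group morphism, and its kernel is $\zzf^d/L$, of cardinality $m$. So $\Sigma$ is $m$-to-one.

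Equivariance is clear: $\Sigma\circ(T_{H_1}\times T_{H_2})=T\circ\Sigma$, since $T$ is additive and $T_{H_i}$ is the restriction of $T$. The image of the Haar measure $\eta_{H_1}\otimes\eta_{H_2}$ of the compact group $H_1\times H_2$ under the surjective continuous morphism $\Sigma$ is the Haar measure $\eta$, by Lemma~\ref{image of Haar measure by a surjective morphism}.

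For item 3, I first check that $\Theta$ is well defined. The maps $x\mapsto qP_ix$ send $\zzf^d$ into $\zzf^d$, since $qP_i$ has integer entries, so they descend to $\ttf^d$. Their images lie in $p_{\ttf^d}(K_i)=H_i$ because $qP_ix\in K_i$. Each component is surjective onto $H_i$: for $y\in K_i$ we have $qP_i(y/q)=y$. The key observation is then
\[
\Theta\circ\Sigma=q\cdot\id_{H_1\times H_2},
\qquad
\Sigma\circ\Theta=q\cdot\id_{\ttf^d}.
\]
Hence $\Theta$ is onto $H_1\times H_2$, because multiplication by $q$ is onto on a torus.

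Equivariance $\Theta\circ T=(T_{H_1}\times T_{H_2})\circ\Theta$ holds because $P_i$ commutes with $A$, the $P_i$ being polynomials in $A$ (Proposition~\ref{closure of Gamma}). Measure preservation again follows from Lemma~\ref{image of Haar measure by a surjective morphism}.

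The step requiring some care is the cardinality of the fibres of $\Theta$, which I will obtain by a degree count. Multiplication by $q$ on $\ttf^d$ is $q^d$-to-one. Since $\Sigma\circ\Theta=q\cdot\id_{\ttf^d}$ and all maps are surjective morphisms of compact groups with finite kernels, we get
\[
|\Ker(q\cdot\id)|=|\Ker\Theta|\cdot|\Ker\Sigma|,
\]
that is, $q^d=|\Ker\Theta|\cdot m$. Therefore $\Theta$ is $q^d/m$-to-one.

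To justify the kernel multiplicativity cleanly, I note that $\Ker(\Sigma\circ\Theta)=\Theta^{-1}(\Ker\Sigma)$. Because $\Theta$ is surjective, this set maps onto $\Ker\Sigma$ with fibres that are cosets of $\Ker\Theta$, which gives the product formula. Alternatively, I can compute $\Ker\Theta$ directly: it is $\{x\in\rrf^d : qP_ix\in K_i\cap\zzf^d \text{ for } i=1,2\}/\zzf^d=q^{-1}L/\zzf^d$, whose cardinality is $[q^{-1}L:\zzf^d]=[L:q\zzf^d]=q^d/m$. This direct computation serves as a check on the degree count.
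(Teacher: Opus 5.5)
Your proof is correct. Items 1 and 2 are in substance the paper's argument. Identifying $H_1\times H_2$ with $\rrf^d/L$, and $\Sigma$ with the canonical projection $\rrf^d/L\to\rrf^d/\zzf^d$, is the same computation the paper does: it writes $\Ker\Sigma$ as the image of $R=\{(P_1x,P_2x):x\in\zzf^d\}$ and uses the isomorphism $x\mapsto(P_1x,P_2x)$ to get $|\Ker\Sigma|=[\zzf^d:L]$.

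For item 3 your main route differs. The paper computes $\Ker\Theta=p_{\ttf^d}(q^{-1}L)$ directly and reads off $[q^{-1}L:\zzf^d]=[L:q\zzf^d]=q^d/m$, which is exactly your closing ``check''. It also simply asserts that $\Theta$ is an epimorphism. You instead use the identities $\Sigma\circ\Theta=q\cdot\id_{\ttf^d}$ and $\Theta\circ\Sigma=q\cdot\id_{H_1\times H_2}$. These give the surjectivity of $\Theta$ for free, and they show structurally why the two fibre sizes multiply to $q^d$: the two maps are mutually inverse up to multiplication by $q$. The cost is the small extra step on multiplicativity of kernel orders when composing with a surjection, and you justify it correctly. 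You also make explicit the well-definedness of $\Theta$ on $\ttf^d$ and its equivariance via $P_iA=AP_i$, which the paper leaves implicit.
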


\begin{proof}
  The inclusions $q\zzf^d \subset L \subset \zzf^d$ yields
  $[\zzf^d : L] \times [L : q\zzf^d] = [\zzf^d : q\zzf^d] = q^d$.
  Item~1 follows. 

  The map $\Sigma$ is a continuous surjective morphism of compact groups,
  so it transforms the Haar measure $\eta_{H_1} \times \eta_{H_2}$
  into $\eta$. Moreover, for every $(x_1,x_2) \in K_1 \times K_2$,
  \begin{eqnarray*}
    (p_{\ttf^d}(x_1),p_{\ttf^d}(x_2)) \in \Ker~\Sigma
    &\iff& x_1+x_2 \in \zzf^d \\
    &\iff& (x_1,x_2) \in R \text{ where }
           R := \{(P_1x,P_2x) : x \in \zzf^d\}.
  \end{eqnarray*}
  Thus $\Ker~\Sigma$ is the image of $R$ by the group morphism
  $(x_1,x_2) \mapsto (p_{\ttf^d}(x_1),p_{\ttf^d}(x_2))$
  from $(\rrf^d)^2$ to $(\ttf^d)^2$. Hence
  $$|\Ker~\Sigma| = [R : (K_1 \cap \zzf^d) \times (K_2 \cap \zzf^d)] 
  = [\zzf^d : L] = m,$$
  since the group isomorphism $x \mapsto (P_1x,P_2x)$ from 
  $\zzf^d$ to $R$ induces a group isomorphism from $L$ to
  $(K_1 \cap \zzf^d) \times (K_2 \cap \zzf^d)$.
  Item~2 follows.

  The map $\Theta$ is an epimorphism of compact groups, so it transforms
  $\eta$ into the Haar measure $\eta_{H_1} \times \eta_{H_2}$.
  For every $x \in \rrf^d$,
  \begin{eqnarray*}
    p_{\ttf^d}(x) \in \Ker~\Theta
    \iff (qP_1x,qP_2x) \in \zzf^d \times \zzf^d 
    \iff qx \in L.
  \end{eqnarray*}
  Thus $|\Ker~\Theta| = |p_{\ttf^d}(q^{-1}L)| =
  [q^{-1}L : \zzf^d] = [L : q\zzf^d]$. Item~3 follows.  
\end{proof}

\begin{coro}\label{condition for direct sum}
  Keep the notations of Proposition~\ref{finite-to-one factor maps}.
  The following statements are equivalent.
  \begin{enumerate}
    \item The matrices $P_1$ and $P_2$ have integral entries. 
    \item The subgroup $(K_1 \cap \zzf^d) \oplus (K_2 \cap \zzf^d)$ is
      $\zzf^d$. 
    \item The sum $H_1 + H_2 = \ttf^d$ is direct.  
    \item The endomorphism $\Sigma$ is an automorphism.   
  \end{enumerate}
\end{coro}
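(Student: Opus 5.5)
The four statements are all about the subgroup $L$ of Proposition~\ref{finite-to-one factor maps} and the integer $m = [\zzf^d : L]$, so I will prove the cycle of equivalences through $L$ and $m$.

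For 1$\iff$2, recall from Proposition~\ref{finite-to-one factor maps} that $L = \{x \in \rrf^d : (P_1x,P_2x) \in \zzf^d \times \zzf^d\}$. If $P_1$ and $P_2$ have integral entries, every $x \in \zzf^d$ lies in $L$, so $L = \zzf^d$. Conversely, if $L = \zzf^d$, then for each canonical basis vector $e_j$ the vectors $P_1e_j$ and $P_2e_j$ belong to $\zzf^d$. These are the columns of $P_1$ and $P_2$, so both matrices are integral.

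For 2$\iff$4, item 2 of Proposition~\ref{finite-to-one factor maps} says that $\Sigma$ is a surjective continuous group morphism with $|\Ker~\Sigma| = m = [\zzf^d : L]$. Hence $\Sigma$ is injective, i.e. a bijective continuous morphism of compact groups, exactly when $L = \zzf^d$. A continuous bijection between compact Hausdorff spaces is a homeomorphism, so in that case $\Sigma$ is a topological group isomorphism. It is also an isomorphism of dynamical systems, because it intertwines $T_{H_1} \times T_{H_2}$ with $T$ and carries Haar measure to Haar measure. This is the sense of ``automorphism'' in item 4.

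For 3$\iff$4, the sum $H_1 + H_2$ is direct iff $H_1 \cap H_2 = \{0\}$. Now $\Ker~\Sigma = \{(x,-x) : x \in H_1 \cap H_2\}$, which is in bijection with $H_1 \cap H_2$. Since $\Sigma$ is already surjective by Proposition~\ref{finite-to-one factor maps}, $\Sigma$ is bijective iff $H_1 \cap H_2 = \{0\}$, which is the statement that $H_1 + H_2 = \ttf^d$ is a direct sum.

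The only point that needs care is what ``automorphism'' means in item 4. I read it as an isomorphism from $H_1 \times H_2$ onto $\ttf^d$, both algebraic and measure-theoretic, rather than a self-map. Once that reading is fixed, the steps are routine, and the only nontrivial input is the kernel count $|\Ker~\Sigma| = m$, which is already given by Proposition~\ref{finite-to-one factor maps}.
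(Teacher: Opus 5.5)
Your proof is correct and follows essentially the same route as the paper. The equivalence $(1)\iff(2)$ comes from the description $L = \{x : (P_1x,P_2x) \in \zzf^d \times \zzf^d\}$, and the remaining equivalences come from the kernel count $|\Ker~\Sigma| = m = [\zzf^d : L]$ in Proposition~\ref{finite-to-one factor maps}; your identification of $\Ker~\Sigma$ with $H_1 \cap H_2$, together with the compactness argument for continuity of the inverse, just spells out the paper's terse step $(2) \iff (m=1) \iff (3) \iff (4)$.
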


\begin{proof}
  Given $x \in \zzf^d$, $(P_1x,P_2x)$ is the unique
  $(x_1,x_2) \in K_1 \times K_2$ such that $x_1+x_2=x$, so  
  $x$ belongs to $(K_1 \cap \zzf^d) \oplus (K_2 \cap \zzf^d)$
  if and only if $P_1x$ and $P_2x$ belong to $\zzf^d$.  
  The equivalence $(1) \iff (2)$ follows.

  Proposition~\ref{finite-to-one factor maps} yields the equivalences
  $(2) \iff \big( m=1 \big) \iff (3) \iff (4)$. 
\end{proof}  

Since the unimodular polynomial $f_1 = \chi_{u_1}$ annihilates $u_1$,
the endomorphism $u_1^{-1}$ can be written as a linear combination
with integral coefficients of $\id, u_1, \ldots, u_1^{d_1-1}$, where
$d_1 = \dim H_1$. Therefore, the endomorphism $T_{H_1}$ is invertible,
whereas we will show that in the next subsection
(proposition~\ref{more on Gamma and H}) that the endomorphism
$T_{H_2}$ is exact.

Hence, the equivalent conditions 1, 2, 3 of
Corollary~\ref{condition for direct sum} are sufficient for the existence of 
an algebraic isomorphism (i.e. an isomorphism of compact groups) between $T$
and a direct product of some toral automorphism and some exact toral
endomorphism. The following result shows that they are also necessary.   

\begin{prop}~\label{condition for the existence of an algebraic isomorphism}
  If there exist a toral automorphism $T_1$ of $\ttf^{d_1}$,
  an exact toral endomorphism $T_2$ of $\ttf^{d_2}$, and a continuous
  group isomorphism $\Phi : \ttf^{d_1} \times \ttf^{d_2} \to \ttf^{d}$
  transforming the direct product $T_1 \times T_2$ into $T$,
  then the equivalent conditions 1, 2, 3, 4 of
  Corollary~\ref{condition for direct sum} hold.
\end{prop}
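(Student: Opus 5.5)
The plan is to show that $H_1 \cap H_2 = \{0\}$, which is condition 3 of Corollary~\ref{condition for direct sum}. The idea is that $\Phi$ carries an "automorphism part" and an "exact part" of $T$. We check that these images must be $H_1$ and $H_2$, and then use that $\Phi$ is injective.

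Any continuous group isomorphism between tori lifts to a linear isomorphism. So $\Phi$ is induced by a matrix $B \in GL_d(\zzf)$ from $\rrf^{d_1}\times\rrf^{d_2}$ onto $\rrf^d$. Moreover $T_1 = T_{A_1}$ and $T_2 = T_{A_2}$ with integer matrices satisfying $|\det A_1| = 1$, and the intertwining relation gives $AB = B\,\diag(A_1,A_2)$. Hence $\chi_A = \chi_{A_1}\chi_{A_2}$.

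Since $T_2$ is exact, Theorem~\ref{Krzyzewski's condition for exactness} says that $\chi_{A_2}$ has no nontrivial unimodular factor. On the other hand, $\chi_{A_1}$ is unimodular, and so are all its irreducible factors. By uniqueness of the factorization into monic irreducible factors, $\chi_{A_1} = f_1$ and $\chi_{A_2} = f_2$.

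Set $W_1 := B(\rrf^{d_1}\times\{0\})$ and $W_2 := B(\{0\}\times\rrf^{d_2})$. These subspaces are $A$-stable, and the restriction of $A$ to $W_i$ has characteristic polynomial $f_i$. By Cayley--Hamilton, $W_i \subset \Ker f_i(A) = K_i$. Since $\dim W_i = d_i = \dim K_i$ and $\rrf^d = W_1 \oplus W_2$, we get $W_i = K_i$. It follows that $H_1 = p_{\ttf^d}(K_1) = \Phi(\ttf^{d_1}\times\{0\})$ and $H_2 = \Phi(\{0\}\times\ttf^{d_2})$.

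Since $\Phi$ is injective, these two images intersect only in $0$, so $H_1 \cap H_2 = \{0\}$. Thus the sum $H_1 + H_2$ is direct, which is condition 3. Corollary~\ref{condition for direct sum} then yields conditions 1, 2 and 4.

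Alternatively, one can note that $\zzf^d = B\zzf^d = (W_1\cap\zzf^d)\oplus(W_2\cap\zzf^d)$, which gives condition 2 directly.

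The step I expect to need the most care is the identification $W_i = K_i$. It relies on two facts: that $\chi_{A_1}$ and $\chi_{A_2}$ match $f_1$ and $f_2$, which uses exactness via Krzy\.zewski's criterion together with unique factorization; and that the lift of $\Phi$ is linear and unimodular.
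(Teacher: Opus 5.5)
Your proof is correct and follows essentially the paper's argument. You identify the images of the two coordinate subtori with $H_1$ and $H_2$ by showing that the corresponding subspaces have characteristic polynomials $f_1$ and $f_2$, using unimodularity for the automorphism factor, Krzy\.zewski's criterion for the exact factor, and unique factorization; then injectivity of $\Phi$ gives condition~3. The only difference is that you get these subspaces by lifting $\Phi$ globally to some $B \in GL_d(\zzf)$ instead of invoking the subtorus-of-a-torus proposition, and you spell out the Cayley--Hamilton and dimension count that the paper compresses into ``$F_i = \Ker f_i(A)$''.
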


\begin{proof}
  The algebraic isomorphism $\Phi$ induces algebraic isomorphisms from 
  $\ttf^{d_1} \times \{0\}$ and $\{0\} \times \ttf^{d_2}$ to their images
  $G_1$ and $G_2$, and we have $G_1 \oplus G_2 = \ttf^d$.

  For each $i \in \{1,2\}$, the set $G_i$ is a connected closed subgroup
  of $\ttf^{d}$.
  By proposition~\ref{subtorus of a torus}, $G_i = p_{\ttf^{d}}(F_i)$ for
  some vector subspace $F_i$ of $\rrf^{d}$, and $F_i \cap \zzf^{d}$ is a
  lattice of $F_i$. The isomorphism $\Phi$ transforms $T_i$ into an
  epimorphism $S_i$ of $G_i$, which can be
  obtained by quotienting an isomorphism $s_i$ of the vector space $F_i$.

  Since $G_1 \cap G_2 = \{0_{\ttf^d}\}$, we have $F_1 \cap F_2 \subset \zzf^d$,
  so $F_1 \cap F_2 = \{0_{\rrf^d}\}$. Since $S_1$ is invertible and
  $S_2$ is exact, $\chi_{s_1}$ is unimodular whereas $\chi_{s_2}$ has no
  unimodular factor.
  Furthermore, $\chi_{s_1}\chi_{s_2} = \chi_A = f_1f_2$.
  As a result, $\chi_{s_1}=f_1$ and $\chi_{s_2}=f_2$,
  so $F_i = \Ker f_i(A) = K_i$ and $G_i = H_i$
  for each $i \in \{1,2\}$. Hence condition 3 of
  Corollary~\ref{condition for direct sum} holds. 
\end{proof}

Condition~2 of Corollary~\ref{condition for direct sum} is very similar to 
the necessary and sufficient condition of Proposition~4 in~\cite{Krzyzewski},
namely
$(\Ker f_1(A^*) \cap \zzf^d) \oplus (\Ker f_2(A^*) \cap \zzf^d) = \zzf^d$ 
(where $A^*$ denotes the transpose of $A$), which is established with duality
arguments by Krzy\.{z}ewski. The equivalence of the two conditions is not
obvious since the matrix $A^*$ is not necessarily $\zzf$-similar to $A$.
A simple counterexample is
$$A = \left(\begin{array}{cc}
              1 & -5 \\
              3 & -1
             \end{array}\right).$$

\section{Annex}\label{Annex}

\subsection{Closed subgroups of a finite-dimensional real vector space}


Let us recall a few definitions. Fix a real vector space $V$ with
finite-dimension $d$.

The \emph{rank} of a subset of $V$ is the dimension
of the vector subspace spanned by it. A \emph{sublattice} of $V$ is a
subgroup of $V$ generated by some family of (at most $d$)
linearly independant vectors of $V$. A \emph{lattice} of $V$ is a subgroup
generated by $d$ linearly independent vectors of $V$, i.e. a subgroup
$\zzf v_1 \oplus \cdots \oplus \zzf v_d$, where $(v_1,\ldots,v_d)$ is some
basis of $V$. Such a family of generators is called a $\zzf$-\emph{basis}
of $\Lambda$.

Given a lattice $\Lambda$ of $V$, a \emph{fundamental} domain of
$V/\Lambda$ is a Borel subset $D$ of $V$ which contains exactly one element
in each coset of $V/\Lambda$. A simple example of fundamental domain of
$V/\Lambda$ is $[0,1[v_1 \oplus \cdots \oplus [0,1[v_d$ where
$(v_1,\ldots,v_d)$ is a $\zzf$-basis of $\Lambda$. 

A key result in the theory of lattices claims that every discrete
(or equivalently locally finite) subgroup of $V$ is a sublattice of $V$.
This follows immediately from Proposition~\ref{characterization of lattices}
below. 

\begin{prop}\label{characterization of lattices}
  Let $V$ be a finite-dimensional real vector space and $G$ be a discrete
  subgroup of $V$. If $G$ spans $V$, then $G$ is a lattice of $V$.
\end{prop}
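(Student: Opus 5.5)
We argue by induction on $d = \dim V$, building a $\zzf$-basis of $G$ one vector at a time. If $d=0$ there is nothing to prove. Assume $d \ge 1$ and that the statement holds in dimension $d-1$. Since $G$ spans $V$, we can pick linearly independent vectors $w_1,\ldots,w_d \in G$. Let $W$ be the hyperplane spanned by $w_1,\ldots,w_{d-1}$. The subgroup $G \cap W$ is discrete and spans $W$, so by the induction hypothesis it is a lattice of $W$, with some $\zzf$-basis $(v_1,\ldots,v_{d-1})$.

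Next, we look at how $G$ sits relative to $W$. Fix a linear form $\varphi$ on $V$ with kernel $W$. We claim that $\varphi(G)$ is a discrete subgroup of $\rrf$. Take $g \in G$, and write $g = w + t w_d$ with $w \in W$ and $t = \varphi(g)/\varphi(w_d)$. Subtracting a suitable element of $G \cap W$, we may assume that $w$ lies in the bounded parallelepiped $P := [0,1]v_1 + \cdots + [0,1]v_{d-1}$. If there were infinitely many distinct values $\varphi(g)$ in $[-1,1]$, we would get infinitely many distinct elements of $G$ in the bounded set $P + [-c,c]w_d$. This contradicts discreteness, because a discrete subgroup is closed and hence meets every compact set in a finite set. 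Therefore $\varphi(G) \cap [-1,1]$ is finite. Moreover $\varphi(G)$ is nonzero, since $\varphi(w_d) \neq 0$. It follows that $\varphi(G) = \zzf\,\varphi(v_d)$ for some $v_d \in G$, where $\varphi(v_d)$ is the smallest positive value of $\varphi$ on $G$.

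Now let $g \in G$. Then $\varphi(g) = k\varphi(v_d)$ for some $k \in \zzf$, so $g - k v_d \in G \cap W = \zzf v_1 \oplus \cdots \oplus \zzf v_{d-1}$. Hence $G = \zzf v_1 \oplus \cdots \oplus \zzf v_d$. The vectors $v_1,\ldots,v_d$ are linearly independent, because $v_d \notin W$. So $G$ is a lattice of $V$.

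The main obstacle is the discreteness of $\varphi(G)$: the projection of a discrete group need not be discrete in general. The argument works here only because the reduction modulo the lattice $G \cap W$ confines the relevant elements to a compact region. It also relies on the basic fact that a discrete subgroup has finite intersection with every compact set. We would prove that fact directly: if a compact set contained infinitely many elements of $G$, some two distinct ones would lie within distance $\varepsilon$ of each other, and their difference would be a nonzero element of $G$ near $0$, contradicting discreteness.
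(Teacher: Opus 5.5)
Your proof is correct, but it takes a genuinely different route from the paper's.

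\textbf{The paper's route.} The paper does not induct on dimension. It takes the sublattice $\Lambda$ generated by a basis $g_1,\ldots,g_d$ of $V$ chosen inside $G$. Discreteness makes $G/\Lambda$ finite, so $G \subset (1/m)\Lambda$, where $m$ is the exponent of $G/\Lambda$. Hence the determinants $\det_{\mathbf g}(v_1,\ldots,v_d)$ of $d$-tuples from $G$ lie in $m^{-d}\zzf$, and a basis from $G$ with minimal nonzero $|\det|$ exists. Such a basis spans a parallelepiped meeting $G$ only at its vertices, so it is a $\zzf$-basis of $G$.

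\textbf{Your route.} You peel off one dimension at a time. The induction hypothesis makes $G\cap W$ a lattice of the hyperplane $W$. Reducing modulo that lattice confines elements of $G$ with $|\varphi(g)| \le 1$ to a bounded set, which forces $\varphi(G)$ to be discrete, hence cyclic in $\rrf$. A lift $v_d$ of a generator then completes the basis. You correctly identify why $\varphi(G)$ is discrete even though projections of discrete groups need not be. One small gap: finiteness of $\varphi(G)\cap[-1,1]$ gives a smallest positive value only after noting that it isolates $0$, so every bounded interval meets $\varphi(G)$ in a finite set. This is standard, and your own argument works verbatim on $[-R,R]$.

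\textbf{What each approach buys.}
\begin{itemize}
\item Your approach reduces everything to the one-dimensional fact that discrete subgroups of $\rrf$ are cyclic. As a byproduct, any $\zzf$-basis of $G\cap W$ extends to a $\zzf$-basis of $G$; iterating along a flag, this holds for any subspace $W$ spanned by elements of $G$. That is a useful fact in its own right.
\item The paper's approach avoids induction and gives a global characterization: any basis taken from $G$ that minimizes covolume is a $\zzf$-basis. The cost is the extra finite-index/exponent step needed to make the set of attainable determinants discrete.
\item You also prove explicitly that a discrete subgroup meets every bounded set in finitely many points. The paper uses this fact without comment when it asserts that the semi-open parallelepiped contains finitely many elements of $G$.
\end{itemize}
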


\begin{proof}
  Let $\mathbf{g} = (g_1,\ldots,g_d)$ be a basis of $V$ consiting of
  elements of $G$. Then $\Lambda := \zzf g_1 + \cdots + \zzf g_d$ is a
  lattice of $V$ and a subgroup of $G$.
  Since $G$ is discrete, the semi-open parallelipiped
  $[0,1[ g_1 + \cdots + [0,1[ g_d$ 
  contains finitely many elements of $G$. Therefore, the quotient group
  $G/\Lambda$ is finite. By Lagrange theorem, the order of each element of
  $G/\Lambda$ is finite. Call $m$ the exponent of $G/\Lambda$, i.e. the
  least common multiple of the orders of all elements of $G/\Lambda$.
  For every $g \in G$, $m(g+\Lambda) = 0_{G/\Lambda}$, so $mg \in \Lambda$. 
  Hence $G \subset (1/m)\Lambda$.

  As a result, for every $(v_1,\ldots,v_d) \in G^d$,
  $\det_{\bf g}(v_1,\ldots,v_d) \in (1/m)^d \zzf$,
  so the infimum of $|\det_{\bf g}(v_1,\ldots,v_d)|$ over all $d$-uples
  $(v_1,\ldots,v_d) \in G^d$ which are basis of $V$ is achieved.
  Fix a basis $(v_1,\ldots,v_d) \in G^d$ which achieves this infimum.
  Then $\zzf v_1 + \cdots + \zzf v_d \subset G$ and $G$ intersects the
  parallelipiped
  $P := [0,1] v_1 + \cdots + [0,1] v_d$ only at its vertices:
  otherwise $G$ would contain some point $g = \xi_1v_1+\cdots+\xi_dv_d$
  with $x_i \in [0,1]$ for all $i \in \odc 1,d \fdc$ and
  $x_j \in ~]0,1[$ for some $j \in \odc 1,d \fdc$, and the equality
  $\det_{\bf g}(v_1,\ldots,v_{j-1},g,v_{j+1},\ldots,v_d)
  = \xi_j\det_{\bf g}(v_1,\ldots,v_d)$ would lead to a contradiction. 
  One deduces that $G = \zzf v_1 + \cdots + \zzf v_d$ is a lattice of $V$.
\end{proof}
In the opposite direction, we have the following result. 

\begin{lemm}\label{accumulation point}
  Let $G$ be a closed subgroup of a finite-dimensional real vector space $V$.
  If $G$ has an accumulation point,
  then $G$ contains some non-null vector subspace of $V$.   
\end{lemm}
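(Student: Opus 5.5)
We prove the lemma by reducing to the case where the accumulation point is $0$ and then extracting a line from rescaled small elements of $G$ via compactness. Let $g_0 \in V$ be an accumulation point of $G$. Since $G$ is closed, $g_0 \in G$. There is a sequence $(g_n)$ of elements of $G$, distinct from $g_0$, converging to $g_0$. Setting $h_n := g_n - g_0 \in G$, we get nonzero elements $h_n$ of $G$ tending to $0$. So we may assume from now on that $0$ is an accumulation point of $G$.

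Fix a norm $|\cdot|$ on $V$. Put $u_n := h_n/|h_n|$. The unit sphere is compact, so after extracting a subsequence we may assume $u_n \to u$ with $|u|=1$. We claim that the line $\rrf u$ is contained in $G$; this non-null subspace gives the conclusion.

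To prove the claim, fix $t \in \rrf$ and set $k_n := \lfloor t/|h_n| \rfloor \in \zzf$. Then $k_n h_n \in G$ because $G$ is a subgroup. We estimate
$$|k_n h_n - tu| \le \big|k_n|h_n| - t\big| + |t|\,|u_n - u| \le |h_n| + |t|\,|u_n-u|,$$
which tends to $0$. Hence $tu$ is a limit of elements of $G$, and $tu \in G$ since $G$ is closed.

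There is no serious obstacle here. The only points needing care are that the accumulation point lies in $G$ (this uses closedness) and the choice of integer multiples approximating $t/|h_n|$.
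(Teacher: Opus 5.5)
Your proof is correct and follows essentially the same route as the paper: you shift the accumulation point to $0$ using closedness, normalize the small nonzero elements $h_n$, extract a convergent subsequence $h_n/|h_n| \to u$ on the unit sphere, and show $\rrf u \subset G$ via the integer multiples $\lfloor t/|h_n| \rfloor h_n$. You also give the convergence estimate explicitly and state outright that $g_0 \in G$, both of which the paper leaves implicit.
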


\begin{proof}
  Fix a norm $|\cdot|$ on $V$. Let $g$ be an accumulation point
  and $(g_n)_{n \ge 1}$ be a sequence of elements of $G \setminus \{g\}$
  converging to $g$. Then the vectors $h_n := g_n-g$ are non-null
  elements of $G$ converging to $0_V$. By compactness of the unit sphere,
  and by extracting a subsequence, one may assume that
  $(h_n/|h_n|)$ converges to some unit vector $u$. Since $(|h_n|)$ converges
  to $0$, one gets that for every $\lambda \in \rrf$,
  $$\lambda u
  = \lim_{n \to \infty} \lfloor \lambda/|h_n| \rfloor h_n \in G,$$
  so $\rrf u \subset G$. 
\end{proof}

We will also use a theorem giving the general structure
of all closed subgroups of a real finite-dimensional vector space.

\begin{prop}\label{closed subgroups of a real finite-dimensional vector space}
  {\bf (Closed subgroups of a real finite-dimensional vector space).}
  Let $V$ be a finite-dimensional real vector space and $S$ be a
  closed subgroup of $V$. Then there exist two vector subspaces 
  $F_1$, $F_2$ of $V$ and a lattice $\Lambda_1$ of $F_1$
  such that the sum $F_1+F_2$ is direct and $S = \Lambda_1 \oplus F_2$.
\end{prop}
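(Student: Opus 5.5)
We argue by induction on $\dim V$, using Lemma~\ref{accumulation point} and Proposition~\ref{characterization of lattices}. The idea is to split off the largest vector subspace contained in $S$ and then check that what remains is discrete.

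First, let $F_2$ be the largest vector subspace of $V$ contained in $S$. It exists because the sum of two subspaces contained in the group $S$ is again contained in $S$, and dimensions are bounded. Choose any complementary subspace $W$, so that $V = W \oplus F_2$, and call $q$ the projection onto $W$ along $F_2$. Since $S + F_2 = S$, we get $S = (S \cap W) \oplus F_2$. Indeed, for $x \in S$ we have $q(x) = x - (x - q(x))$ with $x - q(x) \in F_2 \subset S$, so $q(x) \in S \cap W$. Set $S' := S \cap W$; it is a closed subgroup of $W$.

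Next, we show that $S'$ is discrete. Otherwise $S'$ has an accumulation point, and Lemma~\ref{accumulation point}, applied in $W$, gives a nonzero subspace $E \subset S' \subset S$. Then $E \oplus F_2$ is a subspace contained in $S$ and strictly larger than $F_2$, contradicting maximality. Hence $S'$ is a discrete subgroup of $W$. Let $F_1$ be the subspace spanned by $S'$; then $F_1 \subset W$, so the sum $F_1 + F_2$ is direct. Since $S'$ is discrete in $F_1$ and spans $F_1$, Proposition~\ref{characterization of lattices} shows that $\Lambda_1 := S'$ is a lattice of $F_1$. This gives $S = \Lambda_1 \oplus F_2$.

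The only delicate point is the discreteness step. It requires the accumulation-point lemma to be applied to the closed subgroup $S'$ of $W$ rather than to $S$ itself. We also need that ``discrete'' in the sense of having no accumulation point is enough for Proposition~\ref{characterization of lattices}, which only uses that bounded sets contain finitely many points. Both follow because a closed set without accumulation points meets every compact set in a finite set. No induction is actually needed once the maximal subspace $F_2$ is chosen.
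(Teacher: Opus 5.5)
Your proof is correct and follows essentially the paper's argument. You take the largest subspace $F_2\subset S$, split $S$ along a complement, and get discreteness from the accumulation-point lemma combined with maximality of $F_2$; you then conclude with Proposition~\ref{characterization of lattices}. The only cosmetic difference is the choice of complement: the paper argues discreteness via orthogonal projection onto $L^\perp$ and takes the complement $M$ inside $\mathrm{Span}(S)$ spanned by elements of $S$, whereas you take an arbitrary complement $W$ and set $F_1=\mathrm{span}(S\cap W)$. The induction you announce is never used, as you note yourself.
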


\begin{proof}
  Let
  $$L := \{x \in V : \rrf x \subset S\}
  = \bigcap_{\lambda \in \rrf^*} \lambda^{-1}S.$$
  Then $L$ is the largest vector space contained in $S$. 

  Endow $V$ with any scalar product. We claim that $S \setminus L$ cannot
  contain vector arbitrarily close to $0$. Otherwise, their would exist
  some sequence $(x_n)_{n \ge 1}$ of vectors in $S \setminus L$
  converging to $0_V$. Their orthogonal projections
  $(p_{L^\perp}(x_n))_{n \ge 1}$ onto $L^\perp$ would form a sequence of
  nonzero vectors of $S \cap L^\perp$ converging to $0_V$. 
  By lemma~\ref{accumulation point}, $L^\perp$ would contain some
  non trivial vector space, which would contradict the definition of $L$.  
  
  Fix a basis of $L$ and complete it into a basis of
  $\mathrm{Span}(S)$ by including vectors of $S$. The additional vectors
  span a vector subspace $M$ such that $S = L \oplus M$.  
  Since the subgroup $S$ contains $L$, we derive $S = L \oplus (S \cap M)$. 
  The group $S \cap M$ is a lattice of $M$ since it is a full-rank
  subgroup of $M$ in which $0$ is isolated.

  The conclusion follows by taking $F_2=L$, $F_1 = M$,
  $\Lambda_1 = S \cap M$ and $F_0$ any vector subspace such that
  $V = F_0 \oplus \mathrm{Span}(S)$. 
\end{proof}

Using
Proposition~\ref{closed subgroups of a real finite-dimensional vector space},
one can prove that connected closed subgroups of a
torus are subtori. The next statement is used only in the proof of
Proposition~\ref{condition for the existence of an algebraic isomorphism}.

\begin{prop}\label{subtorus of a torus} {\bf (Subtorus of a torus).}
  Let $V$ be a finite-dimensional real vector space, $\Lambda$ be 
  a lattice of $V$ and $G$ be a connected closed subgroup of the torus
  $V/\Lambda$.
  Call $p_{V/\Lambda}$ the canonical projection from $V$ onto $V/\Lambda$.
  Then there exists a vector subspace $F$ of $V$ such that
  $G = p_{V/\Lambda}(F)$ and $F \cap \Lambda$ is a lattice of $F$.
  Hence $G$ is canonically isomorphic to the torus $F/(F \cap \Lambda)$.
\end{prop}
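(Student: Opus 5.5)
The plan is to lift $G$ to $V$ and apply Proposition~\ref{closed subgroups of a real finite-dimensional vector space}. Set $S := p_{V/\Lambda}^{-1}(G)$. It is a closed subgroup of $V$ because $p_{V/\Lambda}$ is continuous, and it contains $\Lambda$. By Proposition~\ref{closed subgroups of a real finite-dimensional vector space}, we can write $S = \Lambda_1 \oplus F$, where $F$ is a vector subspace, $\Lambda_1$ is a lattice of a subspace $F_1$, and $F_1 \cap F = \{0\}$. In particular $S$ is a countable union of translates $\lambda + F$ with $\lambda \in \Lambda_1$, and $F = F + 0$ is the connected component of $0$ in $S$. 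Since $\Lambda \subset S$ spans $V$, we also get $F_1 \oplus F = V$.

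Next I would show that $G = p_{V/\Lambda}(F)$. The set $p_{V/\Lambda}(F)$ is a connected subgroup of $G$ containing $0$. For the reverse inclusion, look at the continuous surjection $S \to G$. The group $S/F \cong \Lambda_1$ is discrete, so the images $p_{V/\Lambda}(\lambda + F)$ are cosets of the subgroup $p_{V/\Lambda}(F)$ in $G$. I would show that $p_{V/\Lambda}(F)$ is closed, hence also open in $G$, because it has countable index (there are countably many cosets) and by Baire. Connectedness of $G$ then forces $G = p_{V/\Lambda}(F)$.

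To prove closedness, I would note that $\overline{p_{V/\Lambda}(F)} = p_{V/\Lambda}(\overline{F + \Lambda})$. The group $\overline{F + \Lambda}$ is a closed subgroup of $S$ containing $F$. Its image in $S/F \cong \Lambda_1$ is a subgroup $\Lambda'$ of the discrete group $\Lambda_1$, so $\overline{F+\Lambda} = \Lambda' \oplus F$, and $p_{V/\Lambda}(\Lambda' \oplus F)$ is a union of countably many cosets of $p_{V/\Lambda}(F)$. A cleaner route is to apply Baire directly: $G$ is compact and equals the countable union of the images $p_{V/\Lambda}(\lambda + F_R)$, where $F_R$ is a closed ball of radius $R$ in $F$. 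These images are compact, so one of them has nonempty interior in $G$. This makes $p_{V/\Lambda}(F)$ a subgroup with nonempty interior, hence open, hence closed. Connectedness then gives equality.

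It remains to prove that $F \cap \Lambda$ is a lattice of $F$; I expect this to be the main obstacle. Since $p_{V/\Lambda}(F) = G$ is compact, the restriction $F \to G$ induces a continuous bijection $F/(F \cap \Lambda) \to G$. The subgroup $F \cap \Lambda$ is discrete in $F$, so by Proposition~\ref{characterization of lattices} it is a lattice of its span $F'$, and $F/(F\cap\Lambda) \cong (F'/(F\cap\Lambda)) \times \rrf^{k}$ with $k = \dim F - \dim F'$. I would show $k = 0$ by contradiction using closedness. If $k > 0$, then $p_{V/\Lambda}(F) = p_{V/\Lambda}(F + \Lambda)$, so $F + \Lambda$ is closed; its image in $V/F$ is $\Lambda/(\Lambda\cap F)$. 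Since $F + \Lambda = p_{V/\Lambda}^{-1}(G)$ is closed and $F$ is its identity component, $(F+\Lambda)/F$ is a discrete subgroup of $V/F$. This subgroup has rank $\mathrm{rank}\,\Lambda - \dim F' = \dim V - \dim F + k$, which exceeds $\dim V/F$ when $k>0$. That is impossible by Proposition~\ref{characterization of lattices}. Hence $F' = F$, and $F \cap \Lambda$ is a lattice of $F$. Finally, the bijection $F/(F\cap\Lambda) \to G$ is a homeomorphism because it is a continuous bijection from a compact space to a Hausdorff space.
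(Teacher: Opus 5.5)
Your proof is correct, but it differs from the paper's in both key steps. Your $F$ is the paper's $F_2$.

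\textbf{Showing $G = p_{V/\Lambda}(F)$.} The paper does not use Baire. It takes the continuous morphism $V/\Lambda \to V/(F+\Lambda)$ induced by the projection onto $F_1$ along $F$. This morphism sends $G$ onto the image of $\Lambda_1$. That image is finite, hence discrete, because $p_{F_1}(\Lambda)$ is a full-rank subgroup of the lattice $\Lambda_1$. Connectedness of $G$ then forces the image to be trivial, i.e. $\Lambda_1 \subset F+\Lambda$.

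Your Baire argument is equally valid. The countably many compact sets $p_{V/\Lambda}(\lambda + F_R)$ cover $G$, so the subgroup $p_{V/\Lambda}(F)$ has nonempty interior, hence is open and closed. It has the advantage of not requiring you to identify the quotient $V/(F+\Lambda)$. Your first attempt via $\overline{F+\Lambda}$ does not conclude on its own, but you rightly replace it with the Baire route.

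\textbf{Showing $F\cap\Lambda$ is a lattice of $F$.} The paper only says that otherwise the compactness of $G$ would be contradicted. That tacitly requires knowing that the continuous bijection $F/(F\cap\Lambda)\to G$ is a homeomorphism, which is an open-mapping type fact the paper does not supply. Your rank count makes this step explicit and self-contained. Since $F+\Lambda = p_{V/\Lambda}^{-1}(G) = \Lambda_1\oplus F$, the image of $\Lambda$ in $V/F$ is discrete and isomorphic to $\Lambda/(\Lambda\cap F)$. Its rank $\dim V-\dim F'$ cannot exceed $\dim V/F$.

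Once $F+\Lambda=\Lambda_1\oplus F$ is known, the same idea can be said more briefly. The projection onto $F_1$ along $F$ maps $\Lambda$ onto $\Lambda_1$ with kernel $\Lambda\cap F$. Hence $\Lambda\cap F$ has rank $\dim V-\dim F_1=\dim F$.
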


\begin{proof}
  Since $p_{V/\Lambda}$ is a continuous group morphism, the set
  $S := p_{V/\Lambda}^{-1}(G)$ is a closed subgroup of $V$ containing $\Lambda$,
  so it has full rank. By
  proposition~\ref{closed subgroups of a real finite-dimensional vector space},
  there exists two vector subspaces $F_1$, $F_2$ of $V$ and a lattice
  $\Lambda_1$ of $F_1$ such that $V = F_1 \oplus F_2$ and
  $S = \Lambda_1 \oplus F_2$. By surjectivity of $p_{V/\Lambda}$,
  we derive $G = p_{V/\Lambda}(S) = p_{V/\Lambda}(\Lambda_1 \oplus F_2)$.

  Call $p_{F_1}$ the projection from $V$ to $F_1$ along $F_2$
  and $p_{V/(F_2+\Lambda)}$ the canonical projection from $V$ onto
  $V/(F_2+\Lambda)$. Since $p_{F_1}(\Lambda) \subset \Lambda + F_2$,  
  the continuous group morphism $p_{V/(F_2+\Lambda)} \circ p_{F_1}$
  vanishes on $\Lambda$. Thus there exists
  a continuous group morphism $f : V/\Lambda \to V/(F_2+\Lambda)$ such that
  $f \circ p_{V/\Lambda} = p_{V/(F_2+\Lambda)} \circ p_{F_1}$. Hence
  $$f(G) = f \circ p_{V/\Lambda} (S)
  = p_{V/(F_2+\Lambda)} \circ p_{F_1} (S) = p_{V/(F_2+\Lambda)}(\Lambda_1).$$
  Since $G$ is connected, the discrete group
  $p_{V/(F_2+\Lambda)}(\Lambda_1)$ is connected, hence trivial.
  As a result, $\Lambda_1 \subset F_2+\Lambda$, so
  $G = p_{V/\Lambda}(\Lambda_1 \oplus F_2) = p_{V/\Lambda}(F_2)$.
  The restriction of $p_{V/\Lambda}$ to $F_2$ is a continuous group morphism
  with kernel $F_2 \cap \Lambda$, so $G$ is canonically isomorphic to
  $F_2/(F_2 \cap \Lambda)$. The discrete subgroup $F_2 \cap \Lambda$
  is therefore a lattice of $F_2$, otherwise it would contradict the
  compactness of $G$.  
\end{proof}

\subsection{Structure of a particular subgroup} 

In this subsection, we fix a finite-dimensional real vector space $V$, a lattice 
$\Lambda$ of $V$ and an endomorphism $u \in GL(V)$ such that 
$u(\Lambda) \subset \Lambda$. We study the structure of the subgroup
$$\Gamma := \bigcup_{n \ge 0}u^{-n}(\Lambda).$$

\begin{lemm}\label{condition for Gamma to be a lattice}
  The subset $\Gamma$ is a subgroup of $V$, stable by $u$ and $u^{-1}$,
  and contained in the union of $|\!\det u|^{-n}\Lambda$ over all
  integers $n \ge 1$.  
  Moreover, $\det u$ is a non-null integer and 
  \begin{itemize}
  \item either $|\!\det u| = 1$ and $\Gamma$ is discrete, equal to $\Lambda$,    
  \item or $|\!\det u| \ge 2 $ and $\overline{\Gamma}$ contains some
    non-trivial vector subspace of $V$.  
  \end{itemize}
\end{lemm}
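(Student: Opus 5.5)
We first check the elementary claims, then split into the two cases according to $|\det u|$.

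\emph{Basic properties.} Since $u(\Lambda) \subset \Lambda$, we have $u^{-n}(\Lambda) \subset u^{-n-1}(\Lambda)$ for every $n$: if $u^n x \in \Lambda$ then $u^{n+1}x = u(u^n x) \in \Lambda$. So $\Gamma$ is an increasing union of subgroups, hence a subgroup. Stability by $u^{-1}$ is immediate, since $u^{-1}(u^{-n}\Lambda) = u^{-n-1}\Lambda$. Stability by $u$ follows from $u(u^{-n}\Lambda) = u^{-(n-1)}\Lambda$ for $n \ge 1$ and $u(\Lambda) \subset \Lambda$.

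Take a $\zzf$-basis of $\Lambda$. In this basis the matrix $M$ of $u$ has integer entries, so $\det u = \det M$ is a nonzero integer, since $u \in GL(V)$. By the comatrix formula, $(\det u)\, u^{-1} = \mathrm{adj}(M)$ has integral entries, so $(\det u)\, u^{-1}(\Lambda) \subset \Lambda$. Iterating gives $u^{-n}(\Lambda) \subset (\det u)^{-n}\Lambda = |\det u|^{-n}\Lambda$. This holds for $n \ge 1$, and for $n=0$ the inclusion $\Lambda \subset |\det u|^{-1}\Lambda$ covers it.

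\emph{Case $|\det u| = 1$.} Here $M^{-1} = \pm\mathrm{adj}(M)$ has integer entries, so $u^{-1}(\Lambda) \subset \Lambda$, and hence $u^{-n}\Lambda \subset \Lambda$ for all $n$. Thus $\Gamma = \Lambda$, which is discrete.

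\emph{Case $r := |\det u| \ge 2$.} First, $u^{-1}(\Lambda) \supsetneq \Lambda$, because $[u^{-1}\Lambda : \Lambda] = [\Lambda : u\Lambda] = r \ge 2$. Next, $[u^{-n}\Lambda : \Lambda] = r^n \to \infty$. Then $\Gamma$ cannot be discrete. Indeed, $\Gamma/\Lambda$ is infinite, so $\Gamma$ meets a bounded fundamental domain $D$ of $V/\Lambda$ in infinitely many points (one per coset). These points have an accumulation point in the compact closure $\overline{D}$, and that accumulation point lies in $\overline{\Gamma}$ and is an accumulation point of $\overline{\Gamma}$. Since $\overline{\Gamma}$ is a closed subgroup, Lemma~\ref{accumulation point} shows it contains a nonzero vector subspace.

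The only step requiring care is the index computation $[\Lambda : u\Lambda] = |\det u|$. This is standard: take the Smith normal form of $M$, or compare covolumes, since $u\Lambda$ is a lattice with covolume $|\det u|$ times that of $\Lambda$. Alternatively, one can avoid indices entirely. If $\Gamma$ were discrete, Proposition~\ref{characterization of lattices} would make it a lattice stable by $u$ and $u^{-1}$. Then $u(\Gamma) = \Gamma$, so $u$ would be a lattice automorphism of $\Gamma$, forcing $|\det u| = 1$, a contradiction.
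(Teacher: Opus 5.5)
Your proof is correct and follows the paper's argument. You use the adjugate matrix for $u^{-n}(\Lambda)\subset|\det u|^{-n}\Lambda$ and integrality of $M^{-1}$ when $|\det u|=1$. When $|\det u|\ge 2$, you take infinitely many cosets of $\Lambda$ in $\Gamma$, represent them in a bounded fundamental domain, and obtain an accumulation point to which Lemma~\ref{accumulation point} applies. The only difference is how strictness is obtained: the paper gets $u(\Lambda)\subsetneq\Lambda$ from the fact that determinants of $d$ vectors of $u(\Lambda)$ are multiples of $\det u$, rather than from the index formula $[\Lambda:u\Lambda]=|\det u|$.
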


\begin{proof}
  Since $(u^{-n}(\Lambda))_{n \ge 0}$ is a non-decreasing sequence of
  subgroups and its union $\Gamma$ is still a subgroup of $V$, closed
  under $u$ and $u^{-1}$ . 
  
  Fix a $\zzf$-basis $\bc$ of $\Lambda$. Since $u(\Lambda) \subset \Lambda$,
  the matrix $A$ of $u$ in the basis $\bc$ has integral coefficients, so
  $\det u = \det A$ is an integer, non-null because $u \in GL(V)$. The
  adjugate matrix (i.e. the transpose of the comatrix) $\tilde{A}$ of $A$
  also has integral coefficients. For every $n \ge 0$,
  $A^{-n} = (\det A)^{-n} \tilde{A}^n$, so
  $u^{-n}(\Lambda) \subset |\!\det u|^{-n}\Lambda$. Therefore, 
  $$\Gamma \subset \bigcup_{n \ge 0} |\!\det u|^{-n}\Lambda.$$   

  If $|\!\det u| = 1$, then the matrix of $u^{-1}$ in the basis $\bc$ has
  also integral coefficients, so $u^{-1}(\Lambda) \subset \Lambda$. Thus
  the sequence $(u^{-n}(\Lambda))_{n \ge 0}$ is constant and
  $\Gamma = \Lambda$. 

  If $|\!\det u| \ge 2$, then some vector of the $\zzf$-basis $\bc$
  must be outside $u(\Lambda)$, because the determinant in $\bc$ of $d$
  arbitrary vectors of $u(\Lambda)$ is always a multiple of $\det u$.  
  Hence, the inclusion $u(\Lambda) \subset \Lambda$ is strict, so
  the sequence $(u^{-n}(\Lambda))_{n \ge 0}$ is strictly increasing.  
  One can choose for each $n \ge 0$ a vector
  $x_n \in u^{-(n+1)}(\Lambda) \setminus u^{-n}(\Lambda)$. Note that all
  the vectors $(x_n)_{n \ge 0}$ are necessarily different. By subtracting
  an appropriate sequence of vectors of $\Lambda$, one may additionally
  assume that $(x_n)_{n \ge 0}$ remains in some bounded fundamental
  domain of $V/\Lambda$. Thus, the sequence $(x_n)_{n \ge 0}$ has an
  limit point and $\Gamma$ has an accumulation point. Then
  Lemma~\ref{accumulation point} shows that $\overline{\Gamma}$ contains
  some non-trivial vector subspace of $V$.

  The proof is complete.
\end{proof}

In the opposite direction, we have the result below. 

\begin{lemm}\label{sufficient condition for Gamma to be dense}
  If the characteristic polynomial $\chi_u$ has no non-trivial unimodular
  (i.e. monic, with integral coefficients and constant term $\pm 1$)
  factor, then $\Gamma$ is dense in $V$.    
\end{lemm}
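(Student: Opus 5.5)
We argue by contradiction, using the structure of closed subgroups. The closure $\overline{\Gamma}$ is a closed subgroup of $V$. By Proposition~\ref{closed subgroups of a real finite-dimensional vector space} it contains a largest vector subspace
$$L := \{x \in V : \rrf x \subset \overline{\Gamma}\}.$$
Since $u\Gamma = \Gamma$, we get $u\overline{\Gamma} = \overline{\Gamma}$, so $u(L) = L$. Moreover $\overline{\Gamma} = L \oplus \Lambda_1$ for some lattice $\Lambda_1$ of a complement $M$ of $L$ inside $\mathrm{Span}(\overline{\Gamma}) = V$; the span is all of $V$ because $\Gamma \supset \Lambda$. Assume $L \neq V$; the goal is to contradict the hypothesis on $\chi_u$.

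Pass to the quotient $W := V/L$, with projection $\pi_W$, and let $\bar u$ be the automorphism of $W$ induced by $u$. The image $\bar\Gamma_0 := \pi_W(\overline{\Gamma}) = \pi_W(\Lambda_1)$ is a lattice of $W$, since $\pi_W$ restricted to $M$ is an isomorphism onto $W$. Also $\bar u(\bar\Gamma_0) = \bar\Gamma_0$, so in a $\zzf$-basis of $\bar\Gamma_0$ the map $\bar u$ has an integral matrix whose inverse is also integral. Hence $\chi_{\bar u}$ is monic with integer coefficients and constant term $\pm1$, that is, unimodular. Its degree is $\dim W \ge 1$.

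It remains to see that $\chi_{\bar u}$ divides $\chi_u$ in $\zzf[X]$. Since $\Lambda \subset \overline{\Gamma}$, the image $\pi_W(\Lambda)$ is a subgroup of the lattice $\bar\Gamma_0$ that spans $W$, so it is itself a lattice of $W$. Then $L \cap \Lambda$ is a lattice of $L$: the kernel of $\pi_W$ restricted to $\Lambda$ is $L\cap\Lambda$, and a rank count gives $\mathrm{rank}(L \cap \Lambda) = \dim V - \dim W = \dim L$. Consequently $L$ is a rational subspace with respect to $\Lambda$, stable by $u$. Choose a $\zzf$-basis of $\Lambda$ adapted to the saturated sublattice $L \cap \Lambda$. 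In that basis $u$ is block upper triangular with integer blocks, so $\chi_u = \chi_{u|_L} \cdot \chi_{\bar u}$. The factor $\chi_{\bar u}$ computed here, via the lattice $\pi_W(\Lambda)$, is the same polynomial as the one computed via $\bar\Gamma_0$, because the characteristic polynomial does not depend on the basis. Thus $\chi_u$ has the non-trivial unimodular factor $\chi_{\bar u}$, a contradiction. Therefore $L = V$ and $\Gamma$ is dense.

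The main obstacle is the rationality step: showing that $L \cap \Lambda$ is a lattice of $L$, so that the factorisation of $\chi_u$ holds over $\zzf$ and not merely over $\rrf$. The argument above handles it by observing that $\pi_W(\Lambda)$ is discrete, being contained in the lattice $\pi_W(\overline{\Gamma})$, and then counting ranks. Notably, this proof does not need Lemma~\ref{condition for Gamma to be a lattice}.
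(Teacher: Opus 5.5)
Your proof is correct and follows the paper's argument. Like the paper, you quotient $V$ by the largest subspace contained in $\overline{\Gamma}$, note that the image of $\overline{\Gamma}$ is a $\bar u$-stable lattice so that $\chi_{\bar u}$ is unimodular, and reach a contradiction via $\chi_{\bar u} \mid \chi_u$. Your rationality argument for divisibility in $\zzf[X]$, which the paper takes for granted, is valid but not needed: a monic integer polynomial dividing $\chi_u$ in $\rrf[X]$ already divides it in $\zzf[X]$, by Euclidean division by a monic polynomial.
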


\begin{proof}
  The closed subgroup $\overline{\Gamma}$ has full rank since it contains
  $\Lambda$. By
  Proposition~\ref{closed subgroups of a real finite-dimensional vector space}, 
  there exist two vector subspaces $F_1$, $F_2$ of $V$ and a lattice
  $\Lambda_1$ of $F_1$ such that the sum $F_1+F_2$ is direct and
  $\overline{\Gamma} = \Lambda_1 \oplus F_2$. Then $F_2$
  is the largest vector subspace contained in $\overline{\Gamma}$.
  Since $u(\overline{\Gamma}) = \overline{u(\Gamma)} = \overline{\Gamma}$,
  we have $u(F_2) = F_2$.

  Taking quotients, $u$ yields an endomorphism $\overline{u}$ of $V/F_2$.
  Since the canonical projection $p_{V/F_2}$ induces an isomorphism from $F_1$
  to $V/F_2$, the subset $p_{V/F_2}(\Lambda_1)$ is a lattice of $V/F_2$.
  The equality
  $$u(\Lambda_1) \oplus F_2 = u(\Lambda_1) \oplus u(F_2) = u(\overline{\Gamma})
  = \overline{\Gamma} = \Lambda_1 \oplus F_2,$$
  yields $\overline{u}(p_{V/F_2}(\Lambda_1)) = p_{V/F_2}(\Lambda_1)$. 
  Thus, the matrix of $\overline{u}$ in a $\zzf$-basis of
  $p_{V/F_2}(\Lambda_1)$ has integral coefficients and its determinant is
  $\pm 1$, so the characteristic polynomial $\chi_{\overline{u}}$
  is unimodular. Since $\chi_{\overline{u}}$ divides $\chi_u$,
  the assumption yields $\chi_{\overline{u}}=1$, so the quotient space
  $V/F_2$ is trivial and $\overline{\Gamma} = V$.   
\end{proof}

Actually, the converse of
Lemma~\ref{sufficient condition for Gamma to be dense} is true, and this
will follow from the next result which gives a complete description of
the closure of $\Gamma$. 

\begin{prop}\label{structure of Gamma}
  Write $\chi_u = f_1f_2$, where $f_1$ is the product of all unimodular
  irreducible in $\zzf[X]$ (or equivalently in $\qqf[X]$) monic
  factors of $\chi_u$ and $f_2$ the product of all non-unimodular
  irreducible monic factors of $\chi_A$, counted with their multiplicities.
  Let $K_1 := \Ker f_1(u)$ and $K_2 := \Ker f_2(u)$. Then
  \begin{enumerate}
  \item One has $V = K_1 \oplus K_2$. The associated projections
    $p_1$ and $p_2$ onto $K_1$ and $K_2$ are polynomials of $u$
    with rational coefficients. 
  \item The vector subspaces $K_1$ and $K_2$ are stable under $u$.
    The induced endomorphisms $u_1$ and $u_2$
    of $K_1$ and $K_2$ satisfy $\chi_{u_1} = f_1$, $\chi_{u_2} = f_2$
    and $\det u_1 = \pm 1$. 
  \item One has $\overline{\Gamma} = p_1(\Lambda) \oplus K_2$ and
    $p_1(\overline{\Gamma}) = p_1(\Gamma) = p_1(\Lambda)$.
    Moreover, the subgroup $p_1(\Lambda)$ is a lattice of $K_1$. 
  \item Call $p_{V/\Lambda}$ the canonical projection from $V$ 
    onto $V/\Lambda$. Then $p_{V/\Lambda}(\overline{\Gamma}) 
    = p_{V/\Lambda}(K_2)$.
  \end{enumerate}
\end{prop}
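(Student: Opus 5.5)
Items 1 and 2 are standard linear algebra. Since $f_1$ and $f_2$ are products of distinct monic irreducible factors over $\qqf$, they are coprime in $\qqf[X]$. Bézout therefore gives $a,b \in \qqf[X]$ with $af_1+bf_2=1$. The kernel lemma then yields $V = K_1 \oplus K_2$, with projections $p_1 = b(u)f_2(u)$ and $p_2 = a(u)f_1(u)$, both polynomials in $u$ with rational coefficients.

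The subspaces $K_1$ and $K_2$ are $u$-stable because they are kernels of polynomials in $u$. The characteristic polynomial of $u_i$ divides a power of $f_i$, and the degrees add up to $\deg\chi_u$. Together with $\chi_{u_1}\chi_{u_2} = \chi_u = f_1f_2$ and unique factorisation, this gives $\chi_{u_i}=f_i$. Since $f_1$ is unimodular, $\det u_1=\pm1$.

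For item 3, I first show that $p_1(\Lambda)$ is a lattice of $K_1$. Fix an integer $q\ge1$ with $qp_1$ integral in a $\zzf$-basis of $\Lambda$. Then $K_1\cap\Lambda \subset p_1(\Lambda) \subset q^{-1}\Lambda\cap K_1$. Here $K_1\cap\Lambda$ has full rank in $K_1$, because $K_1 = \Ker f_1(u)$ is defined over $\qqf$. So $p_1(\Lambda)$ is discrete of full rank in $K_1$, and Proposition~\ref{characterization of lattices} shows it is a lattice.

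Next, $p_1$ commutes with $u$, so $u_1(p_1(\Lambda)) = p_1(u(\Lambda)) \subset p_1(\Lambda)$. Since $\det u_1=\pm1$, the matrix of $u_1$ in a $\zzf$-basis of $p_1(\Lambda)$ is in $GL(\zzf)$, hence $u_1^{-1}(p_1(\Lambda)) = p_1(\Lambda)$. It follows that $p_1(u^{-n}\Lambda) = u_1^{-n}p_1(\Lambda) = p_1(\Lambda)$, so $p_1(\Gamma)=p_1(\Lambda)$. As $p_1(\Lambda)$ is closed and $p_1$ is continuous, $p_1(\overline\Gamma) \subset \overline{p_1(\Gamma)} = p_1(\Lambda)$; the reverse inclusion is clear.

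This gives $\overline\Gamma \subset p_1(\Lambda)\oplus K_2$, and the latter set is closed. The main obstacle is the reverse inclusion, i.e. showing that $K_2\subset\overline\Gamma$. Let $\Gamma_2 := \Gamma\cap K_2$, which contains $\Lambda_2 := \Lambda\cap K_2$, a lattice of $K_2$. Because $u_2(\Lambda_2)\subset\Lambda_2$, the group $\Gamma' := \bigcup_n u_2^{-n}(\Lambda_2)$ is contained in $\Gamma_2$. The polynomial $\chi_{u_2}=f_2$ has no nontrivial unimodular factor: its irreducible factors are non-unimodular, and any unimodular factor of $f_2$ is a product of irreducible factors, all of which would have to be unimodular. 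So Lemma~\ref{sufficient condition for Gamma to be dense}, applied to $(K_2,\Lambda_2,u_2)$, shows that $\Gamma'$ is dense in $K_2$. Hence $K_2\subset\overline\Gamma$.

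Finally, $\Lambda\subset\overline\Gamma$ and $K_2\subset\overline\Gamma$ together give $p_1(\Lambda)\subset \Lambda+K_2\subset\overline\Gamma$. Therefore $\overline\Gamma = p_1(\Lambda)\oplus K_2$, the sum being direct since $K_1\cap K_2=0$.

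For item 4, every $x = p_1(\lambda)+k$ with $\lambda\in\Lambda$ and $k\in K_2$ satisfies $x \equiv k - p_2(\lambda) \pmod\Lambda$. Since $k-p_2(\lambda)\in K_2$, this gives $p_{V/\Lambda}(\overline\Gamma)\subset p_{V/\Lambda}(K_2)$. The converse inclusion is immediate from $K_2\subset\overline\Gamma$.
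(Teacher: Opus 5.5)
Your proof is correct and follows the paper's argument; the one variation is at the step $K_2\subset\overline\Gamma$, where you apply Lemma~\ref{sufficient condition for Gamma to be dense} to the lattice $\Lambda\cap K_2$, whose associated union $\bigcup_n u_2^{-n}(\Lambda\cap K_2)$ already lies in $\Gamma$. The paper instead applies the lemma to $p_2(\Lambda)$ and then has to transfer the density of $p_2(\Gamma)$ back into $\overline\Gamma$ using $qp_2\in\zzf[u]$, so your version is slightly cleaner, at the cheap cost of noting that $\Lambda\cap K_2$ is a lattice because $K_2$ is defined over $\qqf$.
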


\begin{proof}
  Since $f_1f_2$ annihilates $u$ and since $f_1$ and $f_2$ are
  relatively prime in $\qqf[X]$, one has
  $V = \Ker (f_1f_2)(u) = K_1 \oplus K_2$. Moreover, 
  there exist $g_1$ and $g_2$ in $\qqf[X]$ such that $g_1f_1+g_2f_2 = 1$
  and the endomorphisms $p_1 := g_2f_2(u)$ and $p_2:= g_1f_1(u)$ are 
  the associated projections onto $K_1$ and $K_2$. Item 1 follows. 

  The vector subspaces $K_1$ and $K_2$ are stable by $u$ and $u^{-1}$.
  Calling $u_1$ and $u_2$ the endomorphisms induced
  by $u$ on $K_1$ and $K_2$, we get by construction that
  the endomorphisms $f_1(u_1)$ and $f_2(u_2)$ are null. Since the 
  characteristic polynomial of $u_1$ (respectively $u_2$) has the same
  irreducible factors as the minimal polynomial of $u_1$
  (respectively $u_2$), one derives that $\chi_{u_1}$ divides some power
  of $f_1 $ and $\chi_{u_2}$ divides some power of $f_2$. Furthermore,
  $\chi_{u_1}\chi_{u_2} = \chi_u = f_1f_2$ and $f_1,f_2$ are relatively prime.
  As a result, $\chi_{u_1} = f_1$ and $\chi_{u_2} = f_2$.
  Since $f_1$ is unimodular, we get $\det u_1 = \pm 1$. Item 2 follows. 

  Fix an integer $q \ge 1$ such that $qg_if_i$ has integral coefficients
  for all $i \in \{1,2\}$. Since $u(\Lambda) \subset \Lambda$,
  $qp_1(\Lambda) = qg_if_i(\Lambda) \subset \Lambda$, so
  $p_i(\Lambda) \subset (1/q)\Lambda$. Using that $\Lambda$ is a lattice
  of $V$, we derive that $p_i(\Lambda)$ is a lattice of $K_i$.
  Since the endomorphims $p_i$ and $u$ commute, we have also
  $$u_i(p_i(\Lambda)) = u(p_i(\Lambda)) = p_i(u(\Lambda)) \subset
  p_i(\Lambda)$$
  and
  $$p_i(\Gamma) = \bigcup_{n \ge 0} p_i(u^{-n}(\Lambda))
  = \bigcup_{n \ge 0} u^{-n}(p_i(\Lambda))
  = \bigcup_{n \ge 0} u_i^{-n}(p_i(\Lambda)).$$ 

  On the one hand, $\det u_1 = \pm 1$, so
  Lemma~\ref{condition for Gamma to be a lattice}
  yields $p_1(\Gamma) = p_1(\Lambda)$. 
  By continuity of $p_1$ and since
  $p_1(\Lambda)$ is closed, we derive 
  $p_1(\overline{\Gamma}) \subset \overline{p_1(\Gamma)} = p_1(\Lambda)$, so
  $$\overline{\Gamma}
  \subset p_1(\overline{\Gamma}) + p_2(\overline{\Gamma})
  \subset p_1(\Lambda) + K_2.$$
  On the other hand, the only unimodular factor of $\chi_{u_2}$ is $1$, so
  Lemma~\ref{sufficient condition for Gamma to be dense} ensures that
  $p_2(\Gamma)$ is dense in $K_2$. Since
  $u(\overline{\Gamma}) \subset \overline{u(\Gamma)} \subset \overline{\Gamma}$
  and since $qp_2 \in \zzf[u]$, we derive, by continuity of $p_2$,   
  $$K_2 = qK_2 = q\overline{p_2(\Gamma}) \subset
  qp_2(\overline{\Gamma}) \subset \overline{\Gamma}$$ whereas 
  $$p_1(\Lambda) \subset ({\id}_V-p_2)(\Gamma) \subset \Gamma+K_2.$$
  Hence, the reverse inclusion 
  $$p_1(\Lambda) + K_2 \subset \Gamma + K_2 \subset \overline{\Gamma}$$
  and item 3 follow. 

  
  The inclusion $K_2 \subset \overline{\Gamma}$ yields 
  $p_{V/\Lambda}(K_2) \subset p_{V/\Lambda}(\overline{\Gamma})$. 
  Conversely, item 3 yields
  $$p_{V/\Lambda}(\overline{\Gamma}) \subset p_{V/\Lambda}(p_1(\Lambda))
    + p_{V/\Lambda}(K_2).$$ 
  Furthermore, $p_1(\Lambda) = (\id_V-p_2)(\Lambda) \subset \Lambda + K_2$, 
  so $p_{V/\Lambda}(p_1(\Lambda)) \subset p_{V/\Lambda}(K_2)$.
  The reverse inclusion and item 4 follows.
\end{proof}

\subsection{Haar measures on Hausdorff compact groups}

Given a Hausdorff compact group $G$, the Haar measure $\eta_G$
is the unique probability measure on $(G,\bc(G))$ which is invariant
by left translations (equivalently, by right translations).
The existence and the uniqueness follow from Haar's theorem. 

\begin{lemm}\label{image of Haar measure by a surjective morphism}
  If $f$ is a surjective continuous endomorphism from a compact group $G$
  to a compact group $G'$, then $f(\eta_G) = \eta_{G'}$.
\end{lemm}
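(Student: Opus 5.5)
We prove that $\nu := f(\eta_G)$ is a left-invariant Borel probability measure on $G'$; the uniqueness part of Haar's theorem then gives $\nu = \eta_{G'}$.

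First, $\nu$ is a well-defined Borel probability measure on $G'$. Indeed, $f$ is continuous, hence Borel measurable, so $\nu(B) := \eta_G(f^{-1}(B))$ makes sense for every $B \in \bc(G')$, and $\nu(G') = \eta_G(G) = 1$. If the uniqueness statement for Haar measure is phrased for regular (Radon) measures, we also need regularity of $\nu$. In our applications the groups are tori or products of tori, hence compact metrizable, and every finite Borel measure on such a space is regular. In general, the image of a Radon measure under a continuous map between compact Hausdorff spaces is again Radon, since compact sets pull back to closed sets and inner regularity on compacts transfers.

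Next, we check left invariance. Fix $g' \in G'$. By surjectivity of $f$, choose $g \in G$ with $f(g) = g'$. Since $f$ is a morphism, for every $x \in G$ we have $f(gx) = g' f(x)$; in other words $L_{g'} \circ f = f \circ L_g$, where $L$ denotes left translation. Hence
$$L_{g'}(\nu) = (L_{g'} \circ f)(\eta_G) = (f \circ L_g)(\eta_G) = f(L_g(\eta_G)) = f(\eta_G) = \nu,$$
using the left invariance of $\eta_G$. By uniqueness in Haar's theorem, $\nu = \eta_{G'}$.

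The only delicate point is the regularity requirement in the uniqueness theorem, which is harmless for the metrizable compact groups used in this paper. Surjectivity is essential: without it, $\nu$ is invariant only under translations by elements of $f(G)$.
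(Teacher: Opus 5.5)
Your proof is correct and matches the paper's argument. Surjectivity lets you write every $g'\in G'$ as $f(g)$, and the intertwining $L_{g'}\circ f = f\circ L_g$ makes $f(\eta_G)$ left-invariant, so uniqueness of Haar measure concludes. Your extra regularity remark, which the paper omits, is right in substance, but the reason is slightly misstated: inner regularity transfers because a compact $K\subset f^{-1}(B)$ has compact image $f(K)\subset B$ with $f(\eta_G)(f(K))\ge \eta_G(K)$, not because compact sets pull back to closed sets.
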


\begin{proof}
  For each $a \in G$, the translation $\tau_a : x \mapsto ax$ of the group
  $G$ preserves $\eta_G$. Furthermore, $f \circ \tau_a = \tau'_{f(a)} \circ f$,
  where $\tau'_{f(a)}$ denotes left translation by $f(a)$ in the group $G'$.
  Thus $\tau'_{f(a)}(f(\eta_G)) = f(\tau_a(\eta_G)) = f(\eta_G)$.
  Since $f$ is surjective, the probability measure $f(\eta_G)$ is invariant
  by left-translations, so it is $\eta_{G'}$.
\end{proof}

\begin{lemm}\label{quotient by a closed normal subgroup}
  Let $G$ be a compact group and $H$ be a closed normal subgroup of $G$.
  Assume that the topology of $G$ can be derived from some distance
  which is invariant by translation. Then 
  \begin{enumerate}
  \item the quotient $G/H$ is a metrizable compact group;
  \item for every coset $C=aH$, the measure $\delta_a*\eta_H$
    does not depend on $a \in C$ and can be denoted by $\eta_C$
    (uniform distribution on $C$);
  \item for every coset $C=aH$, for every probability measure $\pi$ on $C$,
    $\pi * \eta_H = \eta_C$. 
  \item one has  
  $$\eta = \int_{G/H} \eta_C ~ \d\eta_{G/H}(C).$$
  \end{enumerate}
\end{lemm}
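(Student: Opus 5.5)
We prove the four items in order; items 1 and 2 are routine and most of the care goes into item 4.

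\emph{Item 1.} Let $\dist$ be a translation-invariant metric on $G$. On $G/H$ we set $\bar d(aH,bH) := \inf_{h,h' \in H} \dist(ah,bh')$. Since $H$ is closed and compact, this infimum is attained. Hence $\bar d(aH,bH)=0$ forces $aH=bH$, and one checks in the usual way that $\bar d$ is a metric inducing the quotient topology. Compactness passes to the quotient because the canonical projection is continuous and surjective. Since $H$ is normal, $G/H$ is a group and its operations are continuous.

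\emph{Item 2.} If $b=ah$ with $h \in H$, then $\delta_b*\eta_H = \delta_a*(\delta_h*\eta_H) = \delta_a*\eta_H$. This uses the left invariance of $\eta_H$ under translation by elements of $H$.

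\emph{Item 3.} Let $\pi$ be carried by $C=aH$. Then
$$\pi*\eta_H = \int_C \delta_x*\eta_H \,\d\pi(x) = \int_C \eta_C \,\d\pi(x) = \eta_C$$
by item 2. Testing against continuous functions and applying Fubini justifies the integral formula.

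\emph{Item 4.} Define $\nu := \int_{G/H} \eta_C \,\d\eta_{G/H}(C)$, and check first that this is a well-defined probability measure. For $\phi$ continuous on $G$, the map $C \mapsto \int \phi\,\d\eta_C$ is the function $aH \mapsto \int_H \phi(ah)\,\d\eta_H(h)$. It is well defined by item 2 and continuous by uniform continuity of $\phi$ on the compact group $G$. Hence $\nu$ is a Radon probability measure, defined through the Riesz representation theorem.

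It then suffices to show that $\nu$ is left invariant, since uniqueness of Haar measure gives $\nu=\eta_G$. For $g \in G$ we have $\delta_g*\eta_C = \delta_{ga}*\eta_H = \eta_{gC}$. Therefore
$$\delta_g*\nu = \int \eta_{gC}\,\d\eta_{G/H}(C) = \nu,$$
because left translation by $gH$ preserves $\eta_{G/H}$.

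An alternative route is to note that the projection $p : G \to G/H$ sends $\eta_G$ to $\eta_{G/H}$ by Lemma~\ref{image of Haar measure by a surjective morphism}, and that $\eta_G*\eta_H=\eta_G$. Disintegrating $\eta_G$ along $p$ and applying item 3 fiberwise, exactly as in the proof of Proposition~\ref{skew product}, gives the same conclusion. This route needs the existence of the disintegration, which holds because $G$ is compact metrizable.

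The main obstacle is measurability: showing that $C \mapsto \eta_C$ is measurable, so that the integral makes sense. The continuity argument for $C \mapsto \int \phi\,\d\eta_C$ settles this, and that is why we use the direct invariance route.
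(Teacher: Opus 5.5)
Your proof is correct. Items 1--3 are essentially the paper's argument, and your treatment of item 4 is not.

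On items 1--3: your orbit metric $\inf_{h,h'}\dist(ah,bh')$ coincides, by invariance of the metric, with the paper's $\dist(a^{-1}b,H)$. The paper identifies the quotient topology via $p_{G/H}(B(1_G,r))=B(1_{G/H},r)$. Your ``usual way'' is one line: $p_{G/H}$ is $1$-Lipschitz from the compact $G$ onto the Hausdorff $(G/H,\bar d)$, hence closed, hence a quotient map. For item 3, the paper translates $\pi$ to $\delta_{a^{-1}}*\pi$ on $H$ and absorbs it into $\eta_H$. That is your fiberwise Fubini computation in another guise.

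On item 4: the paper's proof is exactly your ``alternative route''. It pushes $\eta_G$ to $\eta_{G/H}$ (Lemma~\ref{image of Haar measure by a surjective morphism}) and disintegrates $\eta_G=\int\pi_C\,\d\eta_{G/H}(C)$ along $p_{G/H}$. It then writes $\eta_G=\eta_G*\eta_H=\int(\pi_C*\eta_H)\,\d\eta_{G/H}(C)=\int\eta_C\,\d\eta_{G/H}(C)$ by item 3. Your main route instead builds the right-hand side by Riesz. You show it is left-invariant using only $\delta_g*\eta_C=\eta_{gC}$ and the invariance of $\eta_{G/H}$, then conclude by uniqueness of Haar measure.

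What your route buys:
\begin{itemize}
\item no disintegration theorem, so no appeal to the standard Borel structure of $G$;
\item no use of item 3 or of the push-forward lemma;
\item an explicit proof that $C\mapsto\int\phi\,\d\eta_C$ is continuous, so the integral in item 4 is well defined, a point the paper leaves implicit.
\end{itemize}
To read item 4 on Borel sets, you only need the routine monotone-class extension from continuous to bounded Borel test functions.

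What the paper's route buys: it is shorter once disintegration is granted. It is also the same disintegrate-then-convolve mechanism reused verbatim in the proof of Proposition~\ref{skew product}, so the lemma and its application share one argument.
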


The last equality follows Weil's formula (see for example~\cite{Iozzi},
Theorem 2.4).

\begin{proof}
  Let $d$ be a distance on $G$ which is invariant by translations and provides
  the topology $G$. For every $x \in G$, the lower bound  
  $\dist(x,H) = \inf\{d(x,h) : h \in H\}$ is achieved at some $h_x \in H$, so
  $$\dist(x^{-1},H) \le d(x^{-1},h_x^{-1}) = d(1_G,xh_x^{-1}) = d(h_x,x)
  = \dist(x,H).$$
  By symmetry, we derive $\dist(x^{-1},H) = \dist(x,H)$.  

  One checks that the formula
  $d(aH,bH) := \dist(a^{-1}b,H)$ is coherent and defines a metric on $G/H$,
  which is also invariant by translations. 
  Call $p_{G/H}$ the canonical projection $p_{G/H}$ from $G$ onto $G/H$.
  Then for every $r>0$, $p_{G/H}(B(1_G,r)) = B(1_{G/H},r)$, where the letter
  $B$ stands for open ball. Indeed, for every $x \in G/H$,
  $$x \in B(1_G,r) \implies
  d(1_{G/H},xH) = d(1_GH,xH) = \dist(x,H) \le d(x,1_G) < r;$$
  conversely, if $d(1_GH,xH)  = \dist(x,H) < r$, then $d(x,h_x)<r$ for
  some $h_x \in H$, so $d(xh_x^{-1},1_G)<r$ and
  $xH = xh_x^{-1}H \in p_{G/H}(B(1_G,r))$.

  Now, let $V \subset G/H$. Then
  \begin{eqnarray*}
    V \text{ is a neighourhood of } 1_{G/H}=H
    &\iff& p_{G/H}^{-1}(V) \text{ is a neighourhood of } 1_G \\
    &\iff& p_{G/H}^{-1}(V) \supset B(1_G,r) \text{ for some } r>0 \\
    &\iff& V \supset p_{G/H}(B(1_G,r)) \text{ for some } r>0 \\
    &\iff& V \supset B(1_{G/H},r) \text{ for some } r>0.
    \end{eqnarray*}
  As a result, the quotient topology on $G/H$ coincides with the
  topology given by $d$. Item~1 follows. 
  
  Let $a \in G$. The measure $\delta_a*\eta_H$ is the image of $\eta_H$ by
  the translation $\tau_a : x \mapsto ax$. If $bH=aH$ with $b \in G$,
  then $b=ah$ for some $h \in H$, and $\tau_b(\eta_H)
  = \tau_a(\tau_h(\eta_H)) = \tau_a(\eta_H)$. Item 2 follows.

  Let $\pi$ be any probability measure on the coset $C=aH$.
  Then $\nu := \delta_{a^{-1}} * \pi$ is a probability measure on $H$
  and $\pi * \eta_H = \delta_a * \nu * \eta_H = \delta_a*\eta_H = \eta_C$.
  Item 3 follows.

  Since $p_{G/H}$ is a surjective continuous group morphism from $G$
  onto $G/H$, one has $p_{G/H}(\eta_G) = \eta_{G/H}$. Hence,
  the disintegration of $\pi$ by $p_{G/H}$ is given by
  $$\pi = \int_{G/H} \pi_C ~ \d\eta_{G/H}(C),$$
  where $\pi_C$ is some probability measure carried by $C$, depending
  measurably on $C$. Hence
  $$\eta_G = \eta_G * \eta_H = \int_{G/H} (\pi_C * \eta_H)~ \d\eta_{G/H}(C)
  = \int_{G/H} \eta_C~ \d\eta_{G/H}(C),$$
  Item~4 follows. 
\end{proof}
\subsection*{Acknowledgements}

I would like to thank Jean Brossard, who has significantly simplified
some of the proofs in the present article.

\end{document}